\theoremstyle{plain}
\newtheorem{theorem}{Theorem}[section]
\newtheorem{lemma}[theorem]{Lemma}
\newtheorem{corollary}[theorem]{Corollary}
\newtheorem{proposition}[theorem]{Proposition}
\newtheorem{definition}[theorem]{Definition}
\newtheorem{remark}[theorem]{Remark}
\newtheorem{example}[theorem]{Example}
\newtheorem{question}{Question}[section]
\def\GL{\operatorname{GL}}
\def\SL{\operatorname{SL}}
\def\Coker{\operatorname{Coker}}
\def\deg{\operatorname{deg}}
\def\det{\operatorname{det}}
\def\diag{\operatorname{diag}}
\def\End{\operatorname{End}}
\def\Ext{\operatorname{Ext}}
\def\Gr{\operatorname{Gr}}
\def\Hom{\operatorname{Hom}}
\def\Im{\operatorname{Im}}
\def\ker{\operatorname{ker}}
\def\max{\operatorname{max}}
\def\rad{\operatorname{rad}}
\def\rank{\operatorname{rank}}
\def\span{\operatorname{span}}
\def\Spec{\operatorname{Spec}}
\newcommand{\bbC}{\mathbb{C}}
\newcommand{\bbZ}{\mathbb{Z}}
\begin{document}

\title{Algebraic vector bundles on punctured affine spaces and smooth quadrics}

\author{Brent Doran}
\address[Doran]{Department of Mathematics, HG G 63.2, Ramistrasse 101, 8092 Zurich, Switzerland}
\email{brent.doran@math.ethz.ch}

\author{Jun Yu}
\address[Yu]{School of Mathematics, Institute for Advanced Study, Einstein Drive, Fuld Hall, Princeton,
NJ 08540}
\email{junyu@math.ias.edu}

\abstract{There is a canonical $\mathbb{A}^{n}$-fibration from the $(2n+1)$-dimensional smooth quadric
$Y_{n}$ to the $(n+1)$-dimensional punctured affine space $X_{n}$. For each rank at least $n$, we construct
examples of non-isomorphic algebraic vector bundles on the punctured affine space with isomorphic
pullbacks to the smooth quadric. Moreover, we construct continuous families of arbitrarily large dimensional
pairwise non-isomorphic rank $n$ bundles on $X_{n}$ with trivial pullbacks to the smooth quadric.}
\endabstract

\subjclass[2000]{14J60, 19E08.}


\keywords{Smooth quadric, punctured affine space, unimodular row.}
\maketitle

\section{Introduction}

Given a smooth affine variety $X$, denote by $\mathscr{V}_{n}(X)$ the isomorphism classes of rank $n$
algebraic vector bundles on $X$. Morel proved that \footnote[1]{with the rank 2 case completed by Moser}
(cf. \cite{Morel}),  \[\mathscr{V}_{n}(X)=[X,BGL_{n}]_{\mathbb{A}^1}.\] Here , $BGL_{n}$ is the simplicial
classifying space of $\GL_{n}$ (cf. \cite{Morel-Voevodsky}) and $[\cdot,\cdot]_{\mathbb{A}^1}$ denotes
the equivalence classes of maps in the $\mathbb{A}^1$-homotopy category.

The above theorem might make one hope that some form of homotopy invariance holds for the functor
$\mathscr{V}_{n}(X)$ beyond the affine case. Indeed, by the Jouanolou-Thomason homotopy lemma
(cf. \cite{Weibel2}, Proposition 4.4), given a smooth scheme $X$ admitting an ample family of
line bundles (e.g., a quasi-projective variety), there exists a smooth affine scheme $Y$ and
a Zariski locally trivial smooth morphism $f: Y \longrightarrow X$ with fibers isomorphic to affine
spaces ($f$ is called an affine vector bundle torsor).  In particular,
this morphism is an $\mathbb{A}^1$-weak equivalence. Thus the above naive hope would reduce
the study of vector bundles on such schemes to the case of affine varieties. Unfortunately,
Theorem 1.2 in \cite{Asok-Doran} shows that this is false. In \cite{Asok-Doran}, Asok and Doran
constructed continuous families of $\mathbb{A}^{1}$-contractible smooth varieties, which admit
continuous moduli of non-trivial algebraic vector bundles.


Given $n\geq 1$, denote by \[Y_{n}=\SL_{n+1}/\SL_{n}\] and \[X_{n}=\bbC^{n+1}\backslash\{0\},\]
where $\SL_{n}=\SL(n,\bbC)$. Let \[p: Y_{n}\longrightarrow X_{n}\] be the projection which maps an
$(n+1)\times(n+1)$ matrix to its first row (cf. Section \ref{S:e}). Then
$p$ is a fiber bundle with fibers isomorphic to $\bbC^{n}$. Namely, $Y_{n}$ is an
{\it affine vector bundle torsor} over $X_{n}$. From the point of view of the $\mathbb{A}^1$-homotopy
theory, $p$ is an $\mathbb{A}^1$-homotopy weak equivalence. Since the algebraic $K$-theory is
representable in the $\mathbb{A}^1$-homotopy category (cf. Theorem 2.3.1 in \cite{Voevodsky}), it
follows that $p$ induces an isomorphism of algebraic $K$-groups. In particular, we have
$K_{0}(Y_{n})=K_{0}(X_{n})$. Therefore, it is natural to ask the following question:

\begin{question}\label{Q:bundle}
Does there exist a pair of non-isomorphic algebraic vector bundles $(E_1,E_2)$ on $X_{n}$ such
that their pull-backs to $Y_{n}$ satisfy $p^{\ast}(E_1)\cong p^{\ast}(E_2)$?
\end{question}

In this chapter, we give an affirmative answer to Question \ref{Q:bundle}. In fact, given that rank
$r\geq n$, we show that there exist non-trivial bundles on $X_{n}$ of rank $r$ satisfying that their
pull-backs to $Y_{n}$ are trivial (cf. Theorem \ref{T:any rank}). Moreover, we construct continuous
families of pairwise non-isomorphic algebraic bundles on $X_{n}$ of rank $n$ of arbitrarily large
dimension such that their pull-backs to $Y_{n}$ are trivial (cf. Theorem \ref{T:family}). We sketch the
proof as follows. First we define an invariant $e$ for algebraic vector bundles on $\mathbb{P}^{n}$.
Then we use it to study the pull-backs to $X_{n}$ and $Y_{n}$ of bundles on $\mathbb{P}^{n}$. Given an
algebraic vector bundle $E$ on $\mathbb{P}^{n}$, $e(E)$ is a non-negative integer. Most of our studies
are about the bundles $E$ with $e(E)=1$. We also note that Asok and Fasel have made progress reducing
Morel's abstract solution to concrete computation of $\mathbb{A}^1$-homotopy groups of algebraic
spheres (cf. \cite{Asok-Fasel}).

\noindent{\it Notation and conventions.}
Let \[\pi: X_{n}\longrightarrow\mathbb{P}^{n}\] be the natural projection and let \[\rho=\pi\circ p.\]
Let $\mathcal{O}_{\mathbb{P}^{n}}(k)$ be the line bundle on $\mathbb{P}^{n}$ corresponding to the
divisor $kH$, where $H$ is a hyperplane divisor. Given an algebraic vector bundle $E$ on $\mathbb{P}^{n}$,
let $E(k)=E\otimes\mathcal{O}_{\mathbb{P}^{n}}(k)$ and $H^{i}(E)$ denote the $i$-th (sheaf) cohomology
group of $E$.

Throughout, we assume that varieties in this chapter are over the complex field. However, all results
except Swan-Towber's and Swan's theorems and Theorem \ref{T:e=1} (2) hold true for varieties over any
algebraically closed field.

\section{Vector bundles on affine spaces, projective spaces and punctured affine spaces}

The study of algebraic vector bundles on an affine scheme $\Spec A$ is equivalent to the study of
projective $A$-modules. In the 1950s, Serre asked whether algebraic vector bundles on the affine
space $\bbC^{n}$ are trivial. This was addressed by Quillen and Suslin independently in 1976. They
proved that: over a field $k$, any projective $k[x_1,...,x_{n}]$-module of finite rank is trivial.



In the 1950s Grothendieck showed that, any algebraic vector bundle on the projective line $\mathbb{P}^1$
is a direct sum of line bundles $\mathcal{O}_{\mathbb{P}^1}(k)$. In \cite{Schwarzenberger2},
Schwarzenberger introduced {\it almost decomposable bundles}. An algebraic vector bundle on $X=\mathbb{P}^2$
is called almost decomposable if $\dim H^{0}(X,\End E)>1$, where $\End E=E\otimes E^{\ast}$ is the endomorphism
bundle of $E$. Schwarzenberger classified all the almost decomposable bundles on $\mathbb{P}^2$ of rank 2 in
\cite{Schwarzenberger}. 
In \cite{Schwarzenberger2}, Schwarzenberger showed that, any rank 2 bundle $E$ on $X=\mathbb{P}^2$ is of
the form $E=f_{\ast}M$ where $M$ is a line bundle on a non-singular $Y$ and $f: Y\longrightarrow X$ is a
ramified double covering. In the case that $Y=Q_2=\mathbb{P}^1\times\mathbb{P}^1$ or $Y=V_2$, a blowing-up of
$\mathbb{P}^2$ with seven distinct points as base points, Schwarzenberger studied the rank 2 bundles
$E=f_{\ast}M$ intensively. Since the 1970s, the study of algebraic vector bundles on projective spaces has
focused on stable (and semi-stable) vector bundles and their moduli spaces. In the 1970s Barth and Hulek
proved that, the moduli space $M_{\mathbb{P}^{2}}(c_1,c_2)$ of stable vector bundles $E$ on $X=\mathbb{P}^2$
with fixed Chern classes $(c_1(E),c_2(E))=(c_1,c_2)$ is an irreducible, rational and smooth variety (cf.
\cite{Barth} and \cite{Hulek}). The books \cite{Okonek-Schneider-Spindler} and \cite{Huybrechts-Lehn} give
an excellent introduction to the study of vector bundles on projective spaces.

In \cite{Horrocks}, Horrocks studied algebraic vector bundles on the punctured spectrum $Y=\Spec A-\{m\}$ for
a regular integral ring $A$ and an maximal ideal $m$ of it. He defined $\Phi$-equivalence for algebraic vector
bundles on $Y$ and showed that(\cite{Horrocks}, Theorem 7.4), a bundle $\mathcal{E}$ on $Y$ is determined up to
$\Phi$-equivalence by the cohomology modules $H^{i} (\mathcal{E})$ $(0<i<\dim A-1)$ and the extensions
$b^{i}(\mathcal{E})\in\Ext_{A}^2(H^{i+1}(\mathcal{E}),H^{i}(\mathcal{E}))$ ($0<i<\dim A-2$). As an application,
Horrocks got the following beautiful criterion:

\begin{theorem}[Horrocks]\label{T:Horrocks}
An algebraic vector bundle $E$ of rank $r$ on $\mathbb{P}^{n}$ is a direct sum of line bundles
$\mathcal{O}_{\mathbb{P}^n}(k)$ if and only if $H^{i}(E(k))=0$ for any $0<i<n$ and any $k\in\bbZ$, where
$E(k)=E\otimes\mathcal{O}_{\mathbb{P}^n}(k)$.
\end{theorem}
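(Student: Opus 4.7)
\emph{Proof proposal for Theorem \ref{T:Horrocks}.}
The ``only if'' direction is immediate from the standard calculation $H^{i}(\mathbb{P}^{n},\mathcal{O}_{\mathbb{P}^{n}}(k))=0$ for $0<i<n$ and all $k\in\bbZ$, combined with additivity of cohomology over direct sums. The substance is the converse, which I would prove by induction on $n$, with base case $n=1$ supplied by Grothendieck's theorem on $\mathbb{P}^{1}$ (the middle-degree hypothesis is vacuous).

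For the inductive step, fix a hyperplane $H\cong\mathbb{P}^{n-1}\subset\mathbb{P}^{n}$ and use the twisted restriction sequence
\begin{equation*}
0\lra E(k-1)\lra E(k)\lra E|_{H}(k)\lra 0
\end{equation*}
for every $k\in\bbZ$. The long exact sequence in cohomology, together with the hypothesis $H^{i}(E(k))=0$ for $0<i<n$, immediately forces $H^{i}(E|_{H}(k))=0$ for all $0<i<n-1$ and all $k$. By the inductive hypothesis, $E|_{H}\cong\bigoplus_{j=1}^{r}\mathcal{O}_{H}(a_{j})$ for some integers $a_{j}$.

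The next step is to promote this splitting to $\mathbb{P}^{n}$. Let $s_{j}\in H^{0}(H,E|_{H}(-a_{j}))$ be the section corresponding to the $j$-th summand of the splitting on $H$. Applying the long exact sequence to the restriction sequence with $k=-a_{j}$ and using the hypothesis $H^{1}(E(-a_{j}-1))=0$ (valid since $0<1<n$ for $n\geq 2$), the restriction map $H^{0}(\mathbb{P}^{n},E(-a_{j}))\to H^{0}(H,E|_{H}(-a_{j}))$ is surjective, so each $s_{j}$ lifts to a global section $\tilde{s}_{j}\in H^{0}(\mathbb{P}^{n},E(-a_{j}))$. Packaging these sections as a bundle map
\begin{equation*}
\phi:F:=\bigoplus_{j=1}^{r}\mathcal{O}_{\mathbb{P}^{n}}(a_{j})\lra E,
\end{equation*}
I get a morphism whose restriction $\phi|_{H}$ is, by construction, the chosen isomorphism.

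The final step is to deduce that $\phi$ itself is an isomorphism. Comparing first Chern classes on $H$ gives $\sum a_{j}=c_{1}(E|_{H})=c_{1}(E)\cdot[H]$, which forces $\det E\otimes(\det F)^{-1}\cong\mathcal{O}_{\mathbb{P}^{n}}$. Hence $\det\phi\in H^{0}(\mathbb{P}^{n},\mathcal{O}_{\mathbb{P}^{n}})=\bbC$ is a constant; it is nonzero because $\det(\phi|_{H})\neq 0$, so $\phi$ is an isomorphism of rank $r$ bundles. The main obstacle I expect is a notational one — keeping track of the twists during the induction and verifying that the Chern class bookkeeping really does force $\det F\cong\det E$ on the nose — rather than a conceptual one, since the cohomology vanishing hypothesis is exactly strong enough to feed both the inductive descent to $H$ and the lifting of sections from $H$.
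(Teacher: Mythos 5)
Your proof is correct. Note, however, that the paper itself gives no proof of Theorem \ref{T:Horrocks}: it is quoted as a known result, obtained by Horrocks as a corollary of his classification of bundles on the punctured spectrum up to $\Phi$-equivalence via the cohomology modules $H^{i}(\mathcal{E})$ and the extension classes $b^{i}(\mathcal{E})$ (\cite{Horrocks}, Theorem 7.4); the same splitting criterion is later invoked in the proof of Theorem \ref{T:pull-back} by citing \cite{Okonek-Schneider-Spindler}, Theorem 2.3.1. What you have written is essentially that classical direct argument: induction on $n$ via the hyperplane restriction sequence, lifting the splitting of $E|_{H}$ using the vanishing of $H^{1}(E(k))$, and concluding with the determinant trick. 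The two routes buy different things. Horrocks' machinery is far more general (it determines the bundle on the punctured spectrum up to $\Phi$-equivalence from its intermediate cohomology, of which the splitting criterion is the degenerate case where that cohomology vanishes), and it is the natural ancestor of the $M_{i}(E)$ formalism and the invariant $e(E)$ used throughout the paper. Your induction is more elementary and self-contained, at the cost of generalizing less. One small point worth making explicit in your last step: the identity $c_{1}(E)=(\sum_{j}a_{j})[H]$ uses that restriction $H^{2}(\mathbb{P}^{n},\bbZ)\to H^{2}(H,\bbZ)$ is an isomorphism for $n\geq 2$ (or, equivalently, that $\operatorname{Pic}(\mathbb{P}^{n})\to\operatorname{Pic}(H)$ is an isomorphism); alternatively one can avoid Chern classes entirely by observing that $\det\phi$ is a section of some $\mathcal{O}_{\mathbb{P}^{n}}(m)$ whose restriction to $H$ is nowhere vanishing, which already forces $m=0$.
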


\section{Unimodular and completable rows}

In this section, we give a brief review of the theory of unimodular rows. The readers could refer \cite{Lam}
for more details. Given a commutative ring $R$, let $\GL_{n}(R)$ be the group of $n\times n$ matrices with
entries in $R$. A row vector $\vec{a}=(a_0,a_1,\dots,a_{n})$ is called a {\it unimodular} row if
$Ra_0+Ra_1+\cdots+Ra_{n}=R$. Two unimodular rows $\vec{a}^1,\vec{a}^{2}$ are called equivalent, if
$\vec{a}^2=\vec{a}^1 g$ for some $g\in\GL_{n+1}(R)$. We denote by $\vec{a}^1\sim\vec{a}^{2}$, if
$\vec{a}^1$ and $\vec{a}^{2}$ are equivalent. A unimodular row $\vec{a}$ is called {\it completable} if
$\vec{a}\sim(1,0,\dots,0)$.

Given a unimodular row $\vec{a}$, there is a surjective homomorphism \[R^{n+1}\longrightarrow R,\quad
\sum_{0\leq i\leq n} x_{i}e_{i}\mapsto\sum_{0\leq i\leq n}a_{i}x_{i},\] where $\{e_0,\dots,e_{n}\}$
is the standard basis of $R^{n+1}$. Let $P_{\vec{a}}$ be the kernel of this surjective homomorphism,
which is a projective $R$-module of rank $n$.  It is known that $P_{\vec{a}^1}\cong P_{\vec{a}^2}$
if and only if $\vec{a}^1\sim\vec{a}^2$. In particular, $P_{\vec{a}}$ is trivial if and only if
$\vec{a}$ is completable.

We define the ring $R=\bbC[x_0,x_1,...,x_n,y_0,y_1,...,y_n]/\langle x_0y_0+x_1y_1+\cdots+x_{n}y_{n}-1\rangle$,
and a unimodular row of the form $(f_{0},\dots,f_{n})=(x_0^{a_0},\dots,x_{n}^{a_{n}})$, where
$a_0,\dots,a_{n}$ are non-negative integers. We restate the celebrated $n!$ Theorem of Suslin and its
converse due to Swan-Towber.

\begin{theorem}\label{T:SW-S}
Let $(f_{0},\dots,f_{n})=(x_0^{a_0},\dots,x_{n}^{a_{n}})$, where $a_{i}\in\bbZ_{\geq 1}$.
\begin{itemize}
\item[(1)]{(Suslin, \cite{Lam}) If $n!\mid\prod a_{i}$, then $(f_0,f_1,...,f_{n})\in R^{n+1}$
is completable.}
\item[(2)]{(Swan-Towber, \cite{Swan-Towber}) If $n!\nmid\prod a_{i}$, then
$(f_0,f_1,...,f_{n})\in R^{n+1}$ is not completable.}
\end{itemize}
\end{theorem}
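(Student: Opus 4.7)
The theorem splits into a constructive direction (1) and an obstruction-theoretic converse (2), which I would treat separately.

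For part (1), my plan is to use Suslin's explicit elementary-operation calculus together with his matrix construction. Since $(x_0^{a_0},\dots,x_n^{a_n})$ is unimodular in $R$, there exist $w_0,\dots,w_n\in R$ (obtainable as polynomials in the $y_i$ from the relation $\sum x_iy_i = 1$) with $\sum x_i^{a_i} w_i = 1$. Given any pair of rows $(v,w)$ with $\sum v_iw_i = 1$, Suslin's construction yields recursively in $n$ a matrix $S_n(v,w)\in\SL_{2^n}(R)$ satisfying $S_n(v,w)\cdot S_n(w,v)^t=I$, whose first row is a prescribed polynomial combination of the $v_i$ and $w_j$. The core combinatorial input is Suslin's identity expressing $n!\cdot v_0^{a_0}\cdots v_n^{a_n}$ as an $R$-linear combination of higher-degree monomials in a way compatible with elementary transformations. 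Leveraging this, one shows by induction on $n$ that $(v_0^{a_0},\dots,v_n^{a_n})\sim(1,0,\dots,0)$ whenever $n!\mid\prod a_i$. The main technical burden is the bookkeeping around the elementary operations moving exponents between coordinates.

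For part (2), my plan is to define an Euler-class / top-Chern-class invariant $e(P_{\vec f})\in\bbZ/n!\bbZ$ on the stably-free rank-$n$ projective module $P_{\vec f}$ attached to $\vec f$, measuring its failure to be free. Concretely, I would follow Swan--Towber by extracting an index-type invariant from the Koszul complex associated to $\vec f$, or alternatively compute a Chow-theoretic intersection on the smooth affine quadric $\Spec R = Y_n$. Over $\bbC$ a topological shortcut is available: $Y_n$ deformation-retracts onto $S^{2n+1}$, and the integrality constraints imposed by the Adams operations on the Chern character manifest as the appearance of $n!$ in the denominator of the relevant Chern class — this is precisely where complex coefficients enter. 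One then computes $e(P_{\vec f})\equiv \prod a_i\pmod{n!}$ for the monomial rows. If $\vec f$ were completable then $P_{\vec f}\cong R^n$ would be free, forcing $e(P_{\vec f})=0$, whence $n!\mid\prod a_i$; contrapositively, non-divisibility obstructs completability.

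The main obstacle in (1) is the delicate combinatorial identity producing exactly an $n!$-factor: it is easy to obtain \emph{some} divisibility from the Suslin matrix construction, but matching $n!$ precisely requires a careful induction. The main obstacle in (2) is defining the invariant $e$ so that it simultaneously vanishes on free modules and detects the nontriviality of $P_{\vec f}$ for monomial rows with $n!\nmid\prod a_i$; it is this detection step, relying on complex-coefficient topological $K$-theory of the sphere $S^{2n+1}$, that the paper flags as restricting part (2) to varieties over $\bbC$.
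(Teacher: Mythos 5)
Your outline is essentially sound, but note first that the paper does not prove Theorem \ref{T:SW-S} at all: it restates it with citations to \cite{Lam} (Suslin) and \cite{Swan-Towber}, and the only argument it supplies anywhere is the Section on ``A theorem of Swan,'' which proves the generalization Theorem \ref{T:Kumar-Swan}(2) and specializes to part (2) here. Measured against that, your part (2) is the same argument in substance: completability forces $P_{\vec f}$ to be free, hence the row, viewed as a map $S^{2n+1}\to\bbC^{n+1}\setminus\{0\}\simeq S^{2n+1}=\U(n+1)/\U(n)$, lifts to $\U(n+1)$, so its degree lies in $\im\bigl(\pi_{2n+1}(\U(n+1))\to\pi_{2n+1}(S^{2n+1})\bigr)=n!\,\bbZ$ (Bott; equivalently the Chern character integrality you invoke), and for monomial rows the degree is $\prod a_i$ --- here even easier than the paper's Bezout/regular-value count, since $(z_0,\dots,z_n)\mapsto(z_0^{a_0},\dots,z_n^{a_n})$ is a join of circle maps of degrees $a_i$. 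One correction: do not phrase the obstruction as an Euler class or top Chern class of $P_{\vec f}$, since $H^{2i}(S^{2n+1};\bbZ)=0$ for $i>0$ and every characteristic class of $P_{\vec f}$ vanishes; the invariant must be the homotopy class of the unimodular row itself in $\pi_{2n+1}(S^{2n+1})/\im\pi_{2n+1}(\U(n+1))\cong\bbZ/n!\bbZ$, which is exactly what Swan--Towber and the paper's degree computation use. For part (1) you propose to reprove Suslin's $n!$ theorem via the Suslin matrices and his factorial identity; that is the standard route, but the paper (reasonably) just cites it, and for the present ring nothing more is needed because $(x_0,\dots,x_n)$ is already unimodular in $R$ and Suslin's general theorem applies verbatim.
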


We remark that Suslin's theorem holds on any field.

Given a unimodular row $(f_{0},\dots,f_{n})$ with $f_{i}$ a homogeneous polynomial of degree $a_{i}$,
the following generalization of Theorem \ref{T:SW-S} states that:

\begin{theorem}[\cite{Kumar} and \cite{Swan2}]\label{T:Kumar-Swan}
Let $\{f_{0},\dots,f_{n}\}$ be homogeneous polynomials of $x_0,...,x_{n}$ satisfying that
\[\rad(f_0,f_1,...,f_{n})=(x_0,x_1,...,x_{n}).\]
\begin{itemize}
\item[1),]{(Mohan Kumar, \cite{Kumar}) If $n!\mid\prod\deg f_{i}$, then $(f_0,f_1,...,f_{n})\in R^{n+1}$
is completable.}
\item[2),]{(Swan, \cite{Swan2}) If $n!\nmid\prod\deg f_{i}$, then $(f_0,f_1,...,f_{n})\in R^{n+1}$ is
not completable.}
\end{itemize}
\end{theorem}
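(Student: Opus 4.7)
The plan is to reduce both parts to Theorem~\ref{T:SW-S} by establishing the central claim that, under the radical hypothesis, the unimodular row $(f_0,\ldots,f_n)$ is equivalent in $R^{n+1}$ to the monomial row $(x_0^{a_0},\ldots,x_n^{a_n})$, where $a_i=\deg f_i$. Granting this, the projective modules $P_{(f_0,\ldots,f_n)}$ and $P_{(x_0^{a_0},\ldots,x_n^{a_n})}$ are isomorphic, so completability is governed by whether $n!\mid\prod a_i$ exactly as in Theorem~\ref{T:SW-S}; part~(1) is then Suslin's half, and part~(2) is Swan-Towber's half.

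To establish the equivalence claim, I would use a deformation argument. Consider the family of homogeneous rows
\[
\vec{f}_t \;=\; \bigl((1-t)f_0 + t x_0^{a_0},\ \ldots,\ (1-t)f_n + t x_n^{a_n}\bigr) \in R[t]^{n+1},
\]
whose entries remain homogeneous in $x_0,\ldots,x_n$ of the prescribed degrees $a_0,\ldots,a_n$ for every $t$. Unimodularity of $\vec{f}_t$ over $R$ is equivalent to the polynomials $(1-t)f_i + tx_i^{a_i}$ having no common zero on $\bbC^{n+1}\setminus\{0\}$, which holds at both endpoints by the radical hypothesis on the $f_i$ and trivially for the monomials. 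A dimension count, applied to the closed incidence subvariety in $\bbC^{n+1}\times\mathbb{A}^{1}_t$ cut out by the interpolated equations and controlling its fiber over the origin, should show that unimodularity fails only for $t$ in a finite subset of $\mathbb{A}^{1}$. A rigidity/continuity argument for projective modules in unimodular families then identifies $P_{\vec{f}_0}$ with $P_{\vec{f}_1}$. Alternatively, one can proceed constructively: the Nullstellensatz supplies explicit expressions $x_j^{N_j} = \sum_i h_{ij} f_i$ in $\bbC[x_0,\ldots,x_n]$, and from these relations, pushed into $R$, one assembles a chain of elementary transformations in $\GL_{n+1}(R)$ carrying $(f_0,\ldots,f_n)$ into the orbit of $(x_0^{a_0},\ldots,x_n^{a_n})$, while keeping control of the degree sequence throughout.

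The principal obstacle will be securing unimodularity along the deformation at every relevant value of $t$, since a priori spurious common zeros could appear at intermediate parameters; or, in the constructive variant, organizing the elementary operations so as to preserve the overall degree sequence while passing between the arbitrary homogeneous row and the monomial one. In both formulations the radical hypothesis $\rad(f_0,\ldots,f_n) = (x_0,\ldots,x_n)$ does the critical work: it is precisely what allows the monomial powers $x_j^{N_j}$ to be recovered as $R$-combinations of the $f_i$, and without it there would be no bridge from the general homogeneous row to its monomial model to which Theorem~\ref{T:SW-S} directly applies.
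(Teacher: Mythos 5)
There is a genuine gap, and it sits exactly where you put the weight: the claim that $(f_0,\dots,f_n)$ is $\GL_{n+1}(R)$-equivalent to $(x_0^{a_0},\dots,x_n^{a_n})$. Neither of your two mechanisms establishes it. In the deformation variant, the interpolated row $\vec f_t$ is unimodular only for $t$ outside a finite set, so your family of projective modules lives over a localization $R[t,q(t)^{-1}]$; there is no ``rigidity/continuity'' principle identifying the fibers of such a family. Homotopy invariance of projective modules (Lindel/Bass--Quillen) applies to $R[t]$, not to its localizations, and the non-invariance of algebraic vector bundles under exactly this kind of naive algebraic homotopy is the theme of the surrounding paper. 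What the deformation legitimately gives is only topological: $f$ and the monomial map are homotopic as continuous maps $\bbC^{n+1}\setminus\{0\}\to\bbC^{n+1}\setminus\{0\}$ (a path in $\bbC$ from $0$ to $1$ avoids the finitely many bad $t$), hence have the same topological degree $\prod a_i$. Combined with the Swan--Towber obstruction this proves part (2) --- and that is essentially the paper's own proof of (2), which computes the degree of the induced self-map of $S^{2n+1}$ via Bezout's theorem and positivity of holomorphic Jacobians, and never attempts any algebraic equivalence of rows. But it proves nothing about part (1): completability is an algebraic assertion that cannot be extracted from topological degree data, so the Mohan Kumar half of the theorem is left entirely unaddressed by your argument (the paper itself only cites it and proves only (2)).

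The ``constructive variant'' is not a proof sketch either. The Nullstellensatz relations $x_j^{N_j}=\sum_i h_{ij}f_i$ certify unimodularity of the row, but they give no procedure for a chain of elementary operations terminating at the monomial row, and the exponents $N_j$ they produce bear no relation to the prescribed degrees $a_j$; ``keeping control of the degree sequence throughout'' is precisely the missing content. To repair the proposal: for (2), drop the equivalence claim, phrase the deformation as a homotopy of continuous maps, conclude $\deg g=\prod a_i$, and invoke Swan--Towber (or follow the paper's Bezout computation); for (1), an independent algebraic argument in the style of Mohan Kumar's paper is required and cannot be bypassed by deformation.
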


Mohan Kumar's theorem holds for any algebraic closed field.

\section{Invariant $e$ and some examples}\label{S:e}

Let $Y_{n}=\SL_{n+1}/\SL_{n}$ and $X_{n}=\bbC^{n+1}\backslash\{0\}$, and let $\mathbb{P}^{n}$ be
$n$-dimensional projective space over $\bbC$. Here $\SL_{n}$ is included in $\SL_{n+1}$ in the
following way \[\SL_n=\{\diag\{1,B\}\in\SL_{n+1}|B\in\SL_{n}\}.\]
Let $p:Y_{n}\rightarrow X_{n}$ be the map $p([A])=(a_{00},\cdots,a_{0,n})$ (the first row) for any
\[A=(a_{i,j})_{0\leq i,j\leq n}\in\SL_{n+1},\] where \[[A]=\SL_{n}A\in\SL_{n+1}/\SL_{n}=Y_{n}.\]
Let $\pi:X_{n}\rightarrow\mathbb{P}^{n}$ be the natural projection, and
\[\rho=\pi\circ p:Y_{n}\rightarrow\mathbb{P}^{n}.\]

For $A=(a_{i,j})_{0\leq i,j\leq n}\in\SL_{n+1}$, let $x_{i}=a_{0,i}$ and \[y_{i}=(-1)^{i}\det A_{i}\]
where $A_{i}$ is the $n\times n$ sub-matrix of $A$ with the first row (indexed by 0) and the $(i+1)$-th
column (indexed by $i$) deleted. Then we have the isomorphism
\[\SL_{n+1}/\SL_{n}\cong\{(x_0,\cdots,x_{n},y_0,\cdots,y_{n})\in\bbC^{2n+2}:
\sum_{0\leq i\leq n}x_{i}y_{i}=1\}.\] The last equality implies that $Y_{n}$ is isomorphic to the
$(2n+1)$-dimensional smooth quadric.

Let \[R=R[Y_{n}]=\bbC[x_0,\cdots,x_{n},y_0,\cdots,y_{n}]/\langle\sum_{0\leq i\leq n}x_{i}y_{i}-1
\rangle\] and $S=R[X_{n}]=\bbC[x_0,x_1,\cdots,x_{n}]$. Let $m=(x_0,x_1,\cdots,x_{n})$ be the maximal
ideal corresponding to the point ${0}=\Spec S-X_{n}$.

Given a vector bundle $E$ of rank $r$ on $\mathbb{P}^{n}$ and $i\in\bbZ$, define \[M_{i}(E)=
\bigoplus_{k\in\bbZ}H^{i}(E(k)),\] where $E(k)=E\otimes\mathcal{O}_{\mathbb{P}^{n}}(k)$. Let
$M_{E}=M_0(E)$ and $M'_{E}=M_{E}/mM_{E}$.

\begin{proposition}\label{P:bundle-basic}
Given an algebraic vector bundle $E$ of rank $r$ on $\mathbb{P}^{n}$,
\begin{itemize}
\item[(1)]{if $i<0$ or $i>n$, then $M_{i}(E)=0$.}
\item[(2)]{For any $i$ with $0<i<n$, $M_{i}(E)$ is of finite dimension.}
\item[(3)]{$M_{E}$ is a finitely generated $S=\bigoplus_{k\in\bbZ}\mathcal{O}_{\mathbb{P}^{n}}(k)$
module.}
\item[(4)]{$M'_{E}$ is of finite dimension.}
\end{itemize}
\end{proposition}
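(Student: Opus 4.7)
The plan is to deduce all four statements from standard cohomological tools on $\mathbb{P}^n$: Grothendieck's vanishing theorem, Serre's finiteness and vanishing theorems, Serre duality, and Serre's theorem (FAC) on the correspondence between coherent sheaves on $\mathbb{P}^n$ and finitely generated graded $S$-modules.

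For (1), if $i<0$ then $H^i$ vanishes by definition, while if $i>n$ Grothendieck's vanishing gives $H^i(\mathbb{P}^n,\mathcal{F})=0$ for any abelian sheaf $\mathcal{F}$ since $\dim\mathbb{P}^n=n$; in either case every summand of $M_i(E)$ is zero. For (2), each $H^i(E(k))$ is already a finite-dimensional $\mathbb{C}$-vector space by the finiteness theorem for coherent cohomology on a proper scheme, so it suffices to show that $H^i(E(k))=0$ for all but finitely many $k$. Serre's vanishing theorem handles the range $k\gg 0$ directly. For $k\ll 0$, I would invoke Serre duality: since $\omega_{\mathbb{P}^n}=\mathcal{O}_{\mathbb{P}^n}(-n-1)$,
\[
H^i(E(k))\cong H^{n-i}\!\bigl(E^{\vee}\otimes\mathcal{O}_{\mathbb{P}^n}(-k-n-1)\bigr)^{\ast},
\]
and the hypothesis $0<i<n$ gives $n-i>0$, so another application of Serre vanishing kills the right-hand side once $-k-n-1$ is large enough.

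For (3), the substantive input is Serre's theorem: for any coherent sheaf $\mathcal{F}$ on $\mathbb{P}^n$, the graded $S$-module $\Gamma_{\ast}(\mathcal{F})=\bigoplus_k H^0(\mathcal{F}(k))$ is finitely generated. Applied to $\mathcal{F}=E$, this gives that $M_E$ is finitely generated over $S$. Finally, (4) is an immediate consequence of (3) by a graded Nakayama argument: if $m_1,\ldots,m_s$ is any finite $S$-generating set of $M_E$, the images of the $m_j$ generate $M_E/mM_E$ as a module over $S/m=\mathbb{C}$, so $\dim_{\mathbb{C}} M_E/mM_E\leq s$.

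The only part carrying genuine content is (3); parts (1), (2), (4) are routine once the right cohomological tools are invoked. The small subtlety in (2) is that Serre vanishing must be used twice, first directly for positive twists and then through Serre duality for negative twists, which is precisely where both halves of the hypothesis $0<i<n$ are consumed.
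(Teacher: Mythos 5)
Your proof is correct and follows essentially the same route as the paper: Serre vanishing combined with Serre duality for (2), Serre's finiteness theorem for $\Gamma_{\ast}(\mathcal{F})=\bigoplus_{k}H^{0}(\mathcal{F}(k))$ for (3), and the graded Nakayama observation for (4). The only cosmetic difference is in (1) for $i>n$, where you invoke Grothendieck vanishing directly while the paper uses Serre duality to reduce to a cohomology group in negative degree; both are standard and equivalent in strength here.
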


\begin{proof}
For $(1)$, if $i<0$, it is clear that $H^{i}(E(k))=0$ for any integer $k$. Thus $M_{i}(E)=0$. If
$i>n$, for any $k\in\bbZ$, by the Serre duality $H^{i}(E(k))=(H^{n-i}(E(-n-1-k)))^{\ast}=0$. Hence
$M_{i}(E)=0$.

For $(2)$, since $\mathcal{O}(1)=\mathcal{O}_{\mathbb{P}^{n}}(1)$ is very ample, given an $i>0$ there
exists a $k_{i}\in\bbZ$ such that $H^{i}(E(k))=0$ for $k\geq k_{i}$ (cf. \cite{Hartshorne},
Theorem 5.2). By the Serre duality, for any $i<n$, $H^{i}(E(k))=(H^{n-i}(E^{\ast}(-n-1-k)))^{\ast}$.
Thus there exists a $k'_{i}\in\bbZ$ such that $H^{i}(E(k))=0$ for $k\leq k'_{i}$. Since each
$H^{i}(E(k))$ is of finite-dimension, thus $M_{i}(E)$ is of finite dimension.

For $(3)$, it follows from \cite{Huybrechts-Lehn}, Lemma 1.7.2 and \cite{Hartshorne}, Theorem 5.2 that,
$M_{E}$ is a finitely generated $S=\bigoplus_{k\in\bbZ}\mathcal{O}_{\mathbb{P}^{n}}(k)$ module.

Finally, $(4)$ follows from $(3)$.
\end{proof}

\begin{definition}\label{D:e}
Given an algebraic vector bundle $E$ of rank $r$ on $\mathbb{P}^n$, define $e(E)+r$ to be the minimal
number of generators of the $S$-module $M_{E}$.
\end{definition}

\begin{lemma}\label{L:3-term resolution}
Given an algebraic vector bundle $E$ of rank $r$ on $\mathbb{P}^n$, we have $e=e(E)=\dim_{\bbC}(M'_{E})-r$.
Moreover, there exists an exact sequence
\[0\longrightarrow E'\longrightarrow\bigoplus_{1\leq i\leq e+r}\mathcal{O}_{\mathbb{P}^{n}}(a_{i})
\longrightarrow E\longrightarrow 0\] for some integers $a_1,\dots,a_{e+r}$ and a vector bundle
$E'$ of rank $e$. In particular we have $e(E)\geq 0$.
\end{lemma}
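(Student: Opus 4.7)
The plan is to combine the graded form of Nakayama's lemma with Serre's identification of a coherent sheaf on $\mathbb{P}^n$ with the sheafification of its twisted global sections. For the identity $e(E) = \dim_{\bbC} M'_E - r$, I would first invoke Proposition~\ref{P:bundle-basic}(3): $M_E$ is a finitely generated graded $S$-module, and it vanishes in sufficiently negative degrees since $H^0(E(k)) = 0$ for $k \ll 0$. Because $m = (x_0,\dots,x_n)$ is the irrelevant maximal ideal of $S$ with residue field $\bbC$, the graded Nakayama lemma tells us that a finite set of homogeneous elements generates $M_E$ over $S$ if and only if its image spans $M'_E = M_E / m M_E$ over $\bbC$. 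Consequently the minimal number of generators of $M_E$ equals $\dim_{\bbC} M'_E$, giving $e(E) + r = \dim_{\bbC} M'_E$.

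Next, I would construct the three-term exact sequence. Pick a minimal homogeneous generating set $s_1,\dots,s_{e+r}$ of $M_E$ with $s_i \in H^0(E(-a_i))$. Each $s_i$ corresponds to a morphism $\mathcal{O}_{\mathbb{P}^n}(a_i) \to E$, and together they assemble into
\[
\varphi : \bigoplus_{i=1}^{e+r} \mathcal{O}_{\mathbb{P}^n}(a_i) \longrightarrow E.
\]
On the level of graded $S$-modules the chosen generators yield a surjection $\bigoplus_i S(a_i) \twoheadrightarrow M_E$; sheafifying this surjection reproduces $\varphi$ on the left and $E \cong \widetilde{M_E}$ on the right (via Serre's correspondence between coherent sheaves on $\mathbb{P}^n$ and the associated graded $S$-modules). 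Since sheafification is exact, $\varphi$ is surjective. Setting $E' := \ker \varphi$, any surjection between locally free sheaves is locally split, so $E'$ is locally free of rank $(e+r) - r = e$.

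Finally, at every point $x \in \mathbb{P}^n$ the fiber $\varphi_x : \bbC^{e+r} \twoheadrightarrow \bbC^r$ is a surjection of finite-dimensional vector spaces, forcing $e+r \geq r$ and hence $e(E) \geq 0$. The main obstacle is the surjectivity of $\varphi$: it relies on the standard but non-trivial fact that the sheafification functor from finitely generated graded $S$-modules to coherent sheaves on $\mathbb{P}^n$ is exact and that $\widetilde{M_E} \cong E$. Once these ingredients are invoked, what remains is the graded Nakayama lemma and the local splitting of surjective morphisms between locally free sheaves, both of which are essentially formal.
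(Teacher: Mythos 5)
Your proposal is correct and follows essentially the same route as the paper: both establish $e(E)+r=\dim_{\bbC}M'_{E}$ via the graded Nakayama lemma (the paper carries out the Nakayama argument by hand using the vanishing of $H^{0}(E(k))$ for $k\ll 0$), and both obtain surjectivity of the evaluation map from a minimal generating set by appealing to the standard Serre correspondence $\widetilde{M_{E}}\cong E$ (the paper cites Hartshorne, Theorem 5.17, for the same fact). The remaining steps --- identifying $E'=\ker\varphi$ as locally free of rank $e$ and deducing $e(E)\geq 0$ --- coincide with the paper's argument.
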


\begin{proof}
If the elements $x_1,x_2,\dots,x_{e+r}$ generate the $S$-module $M_{E}$, then they generate the $S/m=\bbC$
vector space $M'_{E}=M_{E}/mM_{E}$. Thus $\dim_{\bbC}(M'_{E})\leq e(E)+r$. On the other hand, by Proposition
\ref{P:bundle-basic}, $M_{E}$ is a finitely generated $S$-module. Thus there exists $k_0\in\bbZ$ such
that $H^{0}(E(k))=0$ for $k<k_0$. Let $x_1,\dots,x_{t}$ be homogeneous elements of $M_{E}$ such that,
\[[x_1]=x_1+mM_{E},\dots,[x_{t}]=x_{t}+mM_{E}\] is a basis of $M'_{E}$. From the condition of $H^{0}(E(k))=0$
for $k<k_0$, one can show that $x_1,\dots,x_{t}$ generate the $S$-module $M_{E}$. Hence
$e(E)+r\leq\dim_{\bbC}(M'_{E})$. Therefore $e(E)=\dim_{\bbC}(M'_{E})-r$.

By the argument above, there exists a system of homogeneous generators
\[\{x_i\in H^{0}(E(-a_{i}))|\ 1\leq i\leq e+r\}\] of the $S$-module $M_{E}$. For each $i$, the element
${x_i\in H^{0}(E(-a_{i}))}$ gives a map \[]\mathcal{O}_{\mathbb{P}^{n}}(a_{i})\longrightarrow E.\] Summing them
up, we get a map of vector bundles \[\psi: \bigoplus_{1\leq i\leq e+r}\mathcal{O}_{\mathbb{P}^{n}}(a_{i})
\longrightarrow E.\] By Theorem 5.17 in \cite{Hartshorne} , $\psi$ is a surjective map of vector bundles.
Let $E'$ be the kernel of $\psi$. Then we get an exact sequence \[0\longrightarrow E'\longrightarrow
\bigoplus_{1\leq i\leq e+r}\mathcal{O}_{\mathbb{P}^{n}}(a_{i})\longrightarrow E\longrightarrow 0.\]
It is clear that $\rank E'=(e(E)+r)-\rank E=e(E)$. Therefore $e(E)\geq 0$.
\end{proof}

\begin{corollary}\label{C:e-additive}
Given an algebraic vector bundle $E$ of rank $r$ on $\mathbb{P}^n$, $e(E)\geq 0$.

Given two algebraic vector bundles $E_1,E_2$ of finite rank on $\mathbb{P}^n$,
$e(E_1\oplus E_2)=e(E_1)+e(E_2)$.
\end{corollary}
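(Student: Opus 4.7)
The first assertion is essentially a restatement of what Lemma \ref{L:3-term resolution} already delivers: the lemma produces a surjection from a direct sum of line bundles of rank $e(E)+r$ onto $E$ whose kernel $E'$ has rank $e(E)$, and since $E'$ is a vector bundle its rank is a nonnegative integer. So for part (1) I would simply cite Lemma \ref{L:3-term resolution}.

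For part (2), the plan is to reduce the assertion to the additivity of $\dim_{\bbC}(M'_{E})-r$ established in the same lemma. The key observation is that both cohomology and tensoring with $\mathcal{O}_{\mathbb{P}^{n}}(k)$ commute with finite direct sums of coherent sheaves, so for every integer $k$ we have
\[H^{0}\bigl((E_1\oplus E_2)(k)\bigr)=H^{0}(E_1(k))\oplus H^{0}(E_2(k)).\]
Summing over $k\in\bbZ$ gives $M_{E_1\oplus E_2}=M_{E_1}\oplus M_{E_2}$ as graded $S$-modules. Reducing modulo the maximal ideal $m$ commutes with finite direct sums as well, so
\[M'_{E_1\oplus E_2}=M'_{E_1}\oplus M'_{E_2},\]
and in particular $\dim_{\bbC}M'_{E_1\oplus E_2}=\dim_{\bbC}M'_{E_1}+\dim_{\bbC}M'_{E_2}$.

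Applying the identity $e(E)=\dim_{\bbC}(M'_{E})-\rank E$ of Lemma \ref{L:3-term resolution} to each of $E_1$, $E_2$, and $E_1\oplus E_2$, and using $\rank(E_1\oplus E_2)=\rank E_1+\rank E_2$, yields
\[e(E_1\oplus E_2)=\dim_{\bbC}M'_{E_1\oplus E_2}-\rank(E_1\oplus E_2)=e(E_1)+e(E_2),\]
as desired. There is no genuine obstacle here; the only thing to verify carefully is that the direct sum decomposition is compatible with the $S$-module structure (which it is, since the $S$-action on $M_{E}$ is induced from the natural multiplication maps $H^{0}(\mathcal{O}(k))\otimes H^{0}(E(\ell))\to H^{0}(E(k+\ell))$, and these are also natural in $E$). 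So the whole corollary is a short bookkeeping consequence of Lemma \ref{L:3-term resolution}.
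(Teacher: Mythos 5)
Your proof is correct and follows exactly the route the paper intends: the nonnegativity of $e(E)$ from the rank of the kernel $E'$ in the three-term resolution of Lemma \ref{L:3-term resolution}, and the additivity from the formula $e(E)=\dim_{\bbC}(M'_{E})-\rank E$ together with the compatibility of $M'_{E}$ with direct sums. You merely spell out the bookkeeping that the paper's one-line proof leaves implicit.
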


\begin{proof}
The first statement follows from the second statement in Lemma \ref{L:3-term resolution}.
The second statement follows from the first statement in Lemma \ref{L:3-term resolution}.
\end{proof}

\begin{proposition}\label{P2:e=1}
Given an algebraic vector bundle $E$ on $\mathbb{P}^{n}$ with $e(E)=1$, there exists an exact
sequence \[0\longrightarrow\mathcal{O}_{\mathbb{P}^{n}}(b)\longrightarrow\bigoplus_{1\leq i\leq r+1}
\mathcal{O}_{\mathbb{P}^{n}}(a_{i})\longrightarrow E\longrightarrow 0\] for some integers
$b,a_1,\dots,a_{r}$.
\end{proposition}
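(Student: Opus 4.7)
The plan is to read this off directly from Lemma \ref{L:3-term resolution} together with the classification of line bundles on projective space. Applying the lemma with $e = e(E) = 1$ produces an exact sequence
\[
0 \longrightarrow E' \longrightarrow \bigoplus_{1 \leq i \leq r+1} \mathcal{O}_{\mathbb{P}^{n}}(a_{i}) \longrightarrow E \longrightarrow 0
\]
in which $E'$ is an algebraic vector bundle of rank $e = 1$, i.e.\ a line bundle on $\mathbb{P}^{n}$.

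The second step is to invoke the fact that $\operatorname{Pic}(\mathbb{P}^{n}) = \mathbb{Z}$, so that every algebraic line bundle on $\mathbb{P}^{n}$ is isomorphic to $\mathcal{O}_{\mathbb{P}^{n}}(b)$ for a unique integer $b$. Choosing such an isomorphism $E' \cong \mathcal{O}_{\mathbb{P}^{n}}(b)$ and splicing it into the short exact sequence above yields the required resolution. Since every ingredient is already in place, there is no real obstacle — the statement is a specialization of Lemma \ref{L:3-term resolution} to the case when the kernel bundle happens to have rank one.
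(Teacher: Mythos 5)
Your argument is correct and is essentially identical to the paper's own proof: both apply Lemma \ref{L:3-term resolution} to obtain the three-term sequence with kernel $E'$ of rank $e(E)=1$, and then identify the rank-one bundle $E'$ with some $\mathcal{O}_{\mathbb{P}^{n}}(b)$ (the paper states this identification without comment; you justify it via $\operatorname{Pic}(\mathbb{P}^{n})=\mathbb{Z}$, which is the intended reason). Nothing further is needed.
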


\begin{proof}
This follows from Lemma \ref{L:3-term resolution}. Note that, since $\rank E'=e(E)=1$, we have
$E'\cong\mathcal{O}_{\mathbb{P}^{n}}(b)$ for some integer $b$.
\end{proof}

\begin{theorem}\label{T:pull-back}
Given an algebraic vector bundle $E$ of rank $r$ on $\mathbb{P}^n$, the following conditions are equivalent
to each other:
\begin{itemize}
 \item[(1)]{$e(E)=0$.}
 \item[(2)]{$E$ is a direct sum of line bundles.}
 \item[(3)]{$\pi^{\ast}E$ is a trivial bundle on $X$.}
 \item[(4)]{$H^{i}(\pi^{\ast}E)=0$ for any $1\leq i\leq n-1$.}
 \item[(5)]{$H^{i}(E(k))=0$ for any $1\leq i\leq n-1$ and any $k\in\bbZ$,}
\end{itemize}
\end{theorem}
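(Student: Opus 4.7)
The plan is to prove the chain of implications
\[
(1)\Leftrightarrow(2)\;\Rightarrow\;(3)\;\Rightarrow\;(4)\Leftrightarrow(5)\;\Rightarrow\;(2),
\]
which suffices to establish the equivalence of all five conditions. The equivalence $(1)\Leftrightarrow(2)$ is a direct consequence of Lemma \ref{L:3-term resolution} together with Corollary \ref{C:e-additive}: when $e(E)=0$, the kernel $E'$ appearing in the three-term resolution has rank zero and therefore vanishes, giving $E\cong\bigoplus_{i=1}^{r}\mathcal{O}_{\mathbb{P}^{n}}(a_{i})$; conversely, for any single line bundle one checks that $M_{\mathcal{O}_{\mathbb{P}^{n}}(a)}$ is a free $S$-module of rank one (a shifted copy of $S$ itself), hence $e(\mathcal{O}_{\mathbb{P}^{n}}(a))=0$, and additivity of $e$ gives $e(\bigoplus\mathcal{O}_{\mathbb{P}^{n}}(a_{i}))=0$. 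The implication $(5)\Rightarrow(2)$ is simply Horrocks' criterion, Theorem \ref{T:Horrocks}, invoked as a black box.

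For $(2)\Rightarrow(3)$, the key geometric point is that $X_{n}=\bbC^{n+1}\setminus\{0\}$ is the complement of the zero section in the total space of the tautological line bundle $\mathcal{O}_{\mathbb{P}^{n}}(-1)$; the tautological inclusion $X_{n}\hookrightarrow\Tot(\mathcal{O}_{\mathbb{P}^{n}}(-1))$ supplies a nowhere-vanishing global section of $\pi^{\ast}\mathcal{O}_{\mathbb{P}^{n}}(-1)$, so $\pi^{\ast}\mathcal{O}_{\mathbb{P}^{n}}(k)\cong\mathcal{O}_{X_{n}}$ for every $k\in\bbZ$, and the pullback of any direct sum of line bundles is trivial. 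The implication $(3)\Rightarrow(4)$ is then immediate from the standard vanishing $H^{i}(X_{n},\mathcal{O}_{X_{n}})=0$ for $1\leq i\leq n-1$, which one reads off from the local cohomology sequence of the pair $\{0\}\subset\mathbb{A}^{n+1}$.

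For $(4)\Leftrightarrow(5)$ I would use that $\pi$ is a $\mathbb{G}_{m}$-torsor, in particular an affine morphism, so $R^{i}\pi_{\ast}$ vanishes on quasi-coherent sheaves for all $i\geq 1$. The $\mathbb{G}_{m}$-grading on functions on $X_{n}$ yields the decomposition $\pi_{\ast}\mathcal{O}_{X_{n}}=\bigoplus_{k\in\bbZ}\mathcal{O}_{\mathbb{P}^{n}}(k)$, and then the projection formula together with the degenerate Leray spectral sequence gives
\[
H^{i}(X_{n},\pi^{\ast}E)\;=\;H^{i}\!\Big(\mathbb{P}^{n},\,\bigoplus_{k\in\bbZ}E(k)\Big)\;=\;\bigoplus_{k\in\bbZ}H^{i}(E(k))\;=\;M_{i}(E),
\]
using that cohomology on the Noetherian scheme $\mathbb{P}^{n}$ commutes with direct sums. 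Hence $(4)$ and $(5)$ literally coincide.

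The main obstacle is the careful bookkeeping in $(2)\Rightarrow(3)$ and $(4)\Leftrightarrow(5)$, namely correctly identifying how the $\mathbb{G}_{m}$-bundle structure of $\pi$ interacts with the Serre twists $\mathcal{O}_{\mathbb{P}^{n}}(k)$; once the two identifications $\pi^{\ast}\mathcal{O}_{\mathbb{P}^{n}}(-1)\cong\mathcal{O}_{X_{n}}$ and $\pi_{\ast}\mathcal{O}_{X_{n}}=\bigoplus_{k}\mathcal{O}_{\mathbb{P}^{n}}(k)$ are in place, the theorem assembles from Lemma \ref{L:3-term resolution}, Corollary \ref{C:e-additive} and Theorem \ref{T:Horrocks} without further hard work.
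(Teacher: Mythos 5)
Your proposal is correct and follows essentially the same route as the paper: the same cycle of implications, with $(1)\Leftrightarrow(2)$ from the three-term resolution, $(5)\Rightarrow(2)$ from Horrocks' criterion, and $(4)\Leftrightarrow(5)$ via $\pi_{\ast}\pi^{\ast}E\cong\bigoplus_{k}E(k)$ and the degenerate Leray spectral sequence for the affine morphism $\pi$. The only difference is that you spell out the steps the paper labels as obvious, namely the triviality of $\pi^{\ast}\mathcal{O}_{\mathbb{P}^{n}}(k)$ on $X_{n}$ and the vanishing $H^{i}(X_{n},\mathcal{O}_{X_{n}})=0$ for $1\leq i\leq n-1$.
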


\begin{proof}
Lemma \ref{L:3-term resolution} implies that $(1)\Leftrightarrow(2)$. It follows from Theorem 2.3.1 in
\cite{Okonek-Schneider-Spindler} that $(5)\Leftrightarrow(2)$. It is obvious that $(2)\Rightarrow(3)$ and
$(3)\Rightarrow(4)$.

To show $(4)\Leftrightarrow(5)$, we consider \[\pi: X_{n}=\bbC^{n+1}-\{0\}\longrightarrow\mathbb{P}^{n},\]
which is a fibre bundle with fibers all isomorphic to $\bbC^{\ast}=\bbC-\{0\}$. A straightforward
calculation shows that $\pi_{\ast}(\pi^{\ast}E)\cong\bigoplus_{k\in\bbZ}E(k)$. Since $\pi$ is an affine
morphism, by the Leray spectral sequence we get \[H^{i}(\pi^{\ast}E)=H^{i}(\pi_{\ast}(\pi^{\ast}E))=
\bigoplus_{k\in\bbZ}H^{i}(E(k)).\] From this equality, we get $(4)\Leftrightarrow(5)$.
\end{proof}

\begin{proposition}\label{P:3-term resolution}
Let $E$ be a vector bundle of rank $r$ on $\mathbb{P}^n$ with $M_{i}(E)=0$ for $1\leq i\leq n-2$.
Then we have an exact sequence
\[0\longrightarrow\bigoplus_{1\leq i\leq e}\mathcal{O}_{\mathbb{P}^{n}}(b_{i})\longrightarrow
\bigoplus_{1\leq i\leq r+e}\mathcal{O}_{\mathbb{P}^{n}}(a_{i})\longrightarrow E\longrightarrow 0\]
for some integers $b_1,\dots,b_{e},a_1,\dots,a_{e+r}$.
\end{proposition}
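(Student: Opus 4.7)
The plan is to iterate Lemma \ref{L:3-term resolution}. That lemma applied to $E$ produces a short exact sequence
\[0\longrightarrow E'\longrightarrow F\longrightarrow E\longrightarrow 0,\]
where $F=\bigoplus_{1\leq i\leq e+r}\mathcal{O}_{\mathbb{P}^{n}}(a_{i})$ and $\rank E'=e=e(E)$. If I can show that $E'$ is itself a direct sum of line bundles, then $E'\cong\bigoplus_{1\leq i\leq e}\mathcal{O}_{\mathbb{P}^{n}}(b_{i})$ for suitable integers $b_{i}$, and substituting this isomorphism into the displayed sequence yields the desired three-term resolution. By Theorem \ref{T:pull-back}, it therefore suffices to prove that $H^{i}(E'(k))=0$ for every $1\leq i\leq n-1$ and every $k\in\bbZ$.

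To get this vanishing I would twist the short exact sequence by $\mathcal{O}_{\mathbb{P}^{n}}(k)$ and read off the long exact sequence in cohomology:
\[H^{i-1}(E(k))\longrightarrow H^{i}(E'(k))\longrightarrow H^{i}(F(k))\longrightarrow H^{i}(E(k)).\]
For $2\leq i\leq n-1$ the right term vanishes because $F$ is a direct sum of line bundles on $\mathbb{P}^{n}$ (middle cohomology is zero), and the left term vanishes by hypothesis since $1\leq i-1\leq n-2$ forces $H^{i-1}(E(k))\subset M_{i-1}(E)=0$; hence $H^{i}(E'(k))=0$ in this range. The only remaining degree is $i=1$, which I expect to be the main point requiring care. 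The relevant tail reads
\[H^{0}(F(k))\longrightarrow H^{0}(E(k))\longrightarrow H^{1}(E'(k))\longrightarrow H^{1}(F(k))=0,\]
so I need to know that the leftmost arrow is surjective for every $k$.

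This surjectivity is not formally part of the statement of Lemma \ref{L:3-term resolution}, but it is visible in its proof: the morphism $F\to E$ is assembled from a system of homogeneous generators $x_{i}\in H^{0}(E(-a_{i}))$ of the $S$-module $M_{E}=\bigoplus_{k}H^{0}(E(k))$, and the induced map on global sections in twist $k$ is precisely the $S$-module multiplication map $\bigoplus_{i}H^{0}(\mathcal{O}_{\mathbb{P}^{n}}(a_{i}+k))\to H^{0}(E(k))$ sending $(p_{i})\mapsto\sum_{i}p_{i}\cdot x_{i}$, which is surjective by the generating property. Granting this, $H^{1}(E'(k))$ vanishes as well; combining with the previous paragraph, $H^{i}(E'(k))=0$ for all $1\leq i\leq n-1$ and all $k$, Theorem \ref{T:pull-back} decomposes $E'$ as $\bigoplus_{1\leq i\leq e}\mathcal{O}_{\mathbb{P}^{n}}(b_{i})$, and splicing this into the sequence from Lemma \ref{L:3-term resolution} gives the proposition.
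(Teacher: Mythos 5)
Your proposal is correct and follows essentially the same route as the paper's own proof: apply Lemma \ref{L:3-term resolution}, use the long exact sequence in cohomology to show $H^{i}(E'(k))=0$ for $1\leq i\leq n-1$ (with the $i=1$ case resting on the surjectivity of $H^{0}(E''(k))\to H^{0}(E(k))$ extracted from the proof of that lemma), and conclude via Theorem \ref{T:pull-back} that $E'$ splits into line bundles.
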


\begin{proof}
By Lemma \ref{L:3-term resolution}, we have an exact sequence \[0\longrightarrow E'\longrightarrow
\bigoplus_{1\leq i\leq e+r}\mathcal{O}_{\mathbb{P}^{n}}(a_{i})\longrightarrow E\longrightarrow 0\] for
some integers $a_1,\dots,a_{e+r}$ and a vector bundle $E'$ of rank $e$. Let \[E''=\bigoplus_{1\leq i\leq e+r}
\mathcal{O}_{\mathbb{P}^{n}}(a_{i})\] and denote by $\psi:E''\longrightarrow E$ the second map. By
the proof of Lemma \ref{L:3-term resolution}, the map $H^{0}(E''(k))\longrightarrow H^{0}(E(k))$ is
surjective for $k\in\bbZ$.

From the short exact sequence $0\longrightarrow E'\longrightarrow E''\longrightarrow E\longrightarrow 0$,
we get \[H^{i-1}(E''(k))\longrightarrow H^{i-1}(E(k))\longrightarrow H^{i}(E'(k))\longrightarrow
H^{i}(E''(k)).\] For $2\leq i\leq n-1$, we have $H^{i-1}(E''(k))=H^{i}(E''(k))=0$, since $E''(k)$ is a
direct sum of line bundles. By the assumption of $H^{i-1}(E(k))=0$, we get $H^{i}(E'(k))=0$. For $i=1$,
$H^{1}(E''(k))=0$, since $E''(k)$ is a direct sum of line bundles and the map $H^{0}(E''(k))
\longrightarrow H^{0}(E(k))$ is surjective by the proof of Lemma \ref{L:3-term resolution}. Therefore,
$H^{1}(E'(k))=0$.

By these, we get $H^{i}(E(k))=0$ for $1\leq i\leq n-1$ and $k\in\bbZ$. By Theorem \ref{T:pull-back}, $E'$
is a direct sum of line bundles.
\end{proof}

\begin{theorem}\label{T:easy examples}
Let $E$ be a vector bundle of rank $r$ on $\mathbb{P}^n$ with $e(E)>0$. Then,
\begin{itemize}
\item[(1)]{$\pi^{\ast}E\oplus(\mathcal{O}_{X_{n}})^{t}$ is not trivial for $t\geq 0$.}
\item[(2)]{If $M_{i}(E)=0$ for $1\leq i\leq n-2$, then $\rho^{\ast}E\oplus(\mathcal{O}_{Y_{n}})^{t}$ is
trivial for $t\geq e$.}
\end{itemize}
\end{theorem}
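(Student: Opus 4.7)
For part (1), my plan is to argue by contradiction: if $\pi^{\ast}E\oplus(\mathcal{O}_{X_{n}})^{t}$ were trivial for some $t\geq 0$, then its higher cohomology would vanish in the middle range, forcing $e(E)=0$ via Theorem \ref{T:pull-back}. Concretely, the computation $H^{i}(\pi^{\ast}\mathcal{F})=\bigoplus_{k\in\bbZ}H^{i}(\mathcal{F}(k))$ already carried out in the proof of Theorem \ref{T:pull-back}, applied to $\mathcal{F}=\mathcal{O}_{\mathbb{P}^{n}}$, shows $H^{i}(\mathcal{O}_{X_{n}})=0$ for $1\leq i\leq n-1$ since $H^{i}(\mathcal{O}_{\mathbb{P}^{n}}(k))$ vanishes in this range for every $k\in\bbZ$. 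The trivial summands therefore contribute nothing in those degrees, so triviality of the direct sum would give $H^{i}(\pi^{\ast}E)=0$ for $1\leq i\leq n-1$, and then the implication (4)$\Rightarrow$(1) of Theorem \ref{T:pull-back} delivers the contradiction $e(E)=0$.

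For part (2), the plan is to apply Proposition \ref{P:3-term resolution} to $E$ and pull the resulting sequence back to $Y_{n}$. Under the hypothesis $M_{i}(E)=0$ for $1\leq i\leq n-2$, that proposition produces an exact sequence
\[0\longrightarrow\bigoplus_{1\leq i\leq e}\mathcal{O}_{\mathbb{P}^{n}}(b_{i})\longrightarrow\bigoplus_{1\leq i\leq r+e}\mathcal{O}_{\mathbb{P}^{n}}(a_{i})\longrightarrow E\longrightarrow 0.\]
Pulling back along $\rho=\pi\circ p\colon Y_{n}\to\mathbb{P}^{n}$ and using that every $\rho^{\ast}\mathcal{O}_{\mathbb{P}^{n}}(k)$ is trivial on $Y_{n}$, this becomes
\[0\longrightarrow(\mathcal{O}_{Y_{n}})^{e}\longrightarrow(\mathcal{O}_{Y_{n}})^{r+e}\longrightarrow\rho^{\ast}E\longrightarrow 0.\]
Because $Y_{n}$ is an affine variety, $\rho^{\ast}E$ corresponds to a projective $R$-module; the sequence therefore splits and gives $\rho^{\ast}E\oplus(\mathcal{O}_{Y_{n}})^{e}\cong(\mathcal{O}_{Y_{n}})^{r+e}$. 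Adding $(\mathcal{O}_{Y_{n}})^{t-e}$ to both sides then yields the desired triviality for every $t\geq e$.

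The only substantive ingredient beyond formal manipulation is the triviality of line bundles on $Y_{n}$. The cleanest justification is to note that $p\colon Y_{n}\to X_{n}$ is a Zariski-locally trivial affine-space bundle, hence induces an isomorphism on Picard groups, and that $\operatorname{Pic}(X_{n})=0$ because $\{0\}\subset\mathbb{A}^{n+1}$ has codimension $n+1\geq 2$ in a factorial variety, so $\operatorname{Pic}(X_{n})=\operatorname{Pic}(\mathbb{A}^{n+1})=0$. With this input in hand I anticipate no serious obstacle in either part: both are short deductions from Theorem \ref{T:pull-back} and Proposition \ref{P:3-term resolution}.
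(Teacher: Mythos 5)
Your proposal is correct. Part (2) is essentially identical to the paper's argument: apply Proposition \ref{P:3-term resolution}, pull back to $Y_{n}$, and split the sequence using affineness; your extra justification that $\rho^{\ast}\mathcal{O}_{\mathbb{P}^{n}}(k)$ is trivial (via $\operatorname{Pic}(Y_{n})\cong\operatorname{Pic}(X_{n})=0$) is a point the paper leaves implicit, and it is sound. For part (1) you take a slightly different path through the same theorem: you deduce $H^{i}(\pi^{\ast}E)=0$ for $1\leq i\leq n-1$ from the hypothetical triviality of the direct sum and then invoke the implication $(4)\Rightarrow(1)$ of Theorem \ref{T:pull-back}, whereas the paper writes $\pi^{\ast}E\oplus(\mathcal{O}_{X_{n}})^{t}=\pi^{\ast}\bigl(E\oplus(\mathcal{O}_{\mathbb{P}^{n}})^{t}\bigr)$, uses the additivity $e(E\oplus(\mathcal{O}_{\mathbb{P}^{n}})^{t})=e(E)>0$ from Corollary \ref{C:e-additive}, and applies the equivalence $(1)\Leftrightarrow(3)$ directly. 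The two routes are logically equivalent given Theorem \ref{T:pull-back}; the paper's version is marginally cleaner in that it avoids the cohomology computation for $\mathcal{O}_{X_{n}}$, while yours makes explicit where the vanishing $H^{i}(\mathcal{O}_{\mathbb{P}^{n}}(k))=0$ for $0<i<n$ enters. No gaps in either part.
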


\begin{proof}
For (1), note that $\pi^{\ast}E\oplus (\mathcal{O}_{X_{n}})^{t}=\pi^{\ast}(E\oplus
(\mathcal{O}_{\mathbb{P}^{n}})^{t})$ and $e(E\oplus(\mathcal{O}_{\mathbb{P}^{n}})^{t})=e(E)>0$. Hence
$\pi^{\ast}E\oplus(\mathcal{O}_{X_{n}})^{t}$ is not trivial by Theorem \ref{T:pull-back}.

For (2), by Proposition \ref{P:3-term resolution} we have the following exact sequence
\[0\longrightarrow\bigoplus_{1\leq i\leq e}\mathcal{O}_{\mathbb{P}^{n}}(b_{i})\longrightarrow
\bigoplus_{1\leq i\leq r+e}\mathcal{O}_{\mathbb{P}^{n}}(a_{i})\longrightarrow E\longrightarrow 0.\]
Since $\rho: Y_{n}\rightarrow\mathbb{P}^{n}$ is an affine morphism, pulling back to $Y_{n}$, we get an
exact sequence \[0\longrightarrow(\mathcal{O}_{Y_{n}})^{e}\longrightarrow(\mathcal{O}_{Y_{n}})^{e+r}
\longrightarrow\rho^{\ast}E\longrightarrow 0.\] Since $Y_{n}$
is affine, the above short exact sequence of vector bundles splits. Thus \[\rho^{\ast}E\oplus
(\mathcal{O}_{Y_{n}})^{t}\cong(\rho^{\ast}E\oplus(\mathcal{O}_{Y_{n}})^{e})\oplus
(\mathcal{O}_{Y_{n}})^{t-e}\cong(\mathcal{O}_{Y_{n}})^{e+r}\oplus(\mathcal{O}_{Y_{n}})^{t-e}
\cong(\mathcal{O}_{Y_{n}})^{t+r}\] is trivial, if $t\geq e$. \end{proof}

Note that, the assumption of $M_{i}(E)=0$ for $1\leq i\leq n-2$ holds true if $n=2$.

\begin{example}[An explicit and simple example]\label{E:first example}
Let $G=\SL_3$,
\[P=\left\{\left(\begin{array}{cc}\lambda&\alpha^{t}\\0_2&B\end{array}\right)|\ B\in\GL_2,\lambda\det B=1,
\alpha\in\bbC^{2}\right\},\] \[P'=\left\{\left(\begin{array}{cc}1&\alpha^{t}\\0_2&B\end{array}\right)|\
B\in\SL_2,\alpha\in\bbC^{2}\right\}\] and \[H=\left\{\left(\begin{array}{cc}1&0_{2}^{t}\\0_2&B\end{array}
\right)|\ B\in\SL_2\right\}.\] Let $\mathfrak{g}$, $\mathfrak{p}$, $\mathfrak{p}'$ and $\mathfrak{h}$ be
their Lie algebras. We have the following identifications \[Y_{2}=G/H,\] \[X_{2}=G/P'\] and
\[\mathbb{P}^2=G/P.\] Let \[E_1=G\times_{P}(\mathfrak{g}/\mathfrak{p}')\] and
\[E_2=G\times_{P}(\mathfrak{g}/\mathfrak{p}\oplus\mathfrak{p}/\mathfrak{p}').\]
They are algebraic vector bundles on $\mathbb{P}^2=G/P$ of rank $3$. Then $\pi^{\ast}(E_1)$ and
$\pi^{\ast}(E_2)$ are non-isomorphic on $X_2$. However, their pull-backs to $Y_2$ are isomorphic.
\end{example}

\begin{proof}
$\pi^{\ast}(E_1)=G\times_{P'}(\mathfrak{g}/\mathfrak{p}')$ is the tangent bundle of $G/P'\cong
\bbC^{3}\backslash\{0\}$. The tangent bundle of $\bbC^{3}\backslash\{0\}$ is clearly trivial, thus
$\pi^{\ast}(E_1)\cong(\mathcal{\mathcal{O}}_{X_2})^3$.
By the Euler sequence (cf. Page $6$ in\cite{Okonek-Schneider-Spindler}) \[0\longrightarrow
\mathcal{O}_{\mathbb{P}^{n}}(-1)\longrightarrow(\mathcal{O}_{\mathbb{P}^{n}})^{n+1}\longrightarrow
T_{\mathbb{P}^{n}}(-1)\longrightarrow 0,\] we get $e(T_{\mathbb{P}^{n}})=1$. In the case of $n=2$, we
get $e(E_2)=e(T_{\mathbb{P}^2})=1$. Hence $\pi^{\ast}(E_2)$ is not trivial. Therefore
$\pi^{\ast}(E_1)\not\cong\pi^{\ast}(E_2)$.

Since $H$ is semisimple, $\mathfrak{g}/\mathfrak{p}'\cong\mathfrak{g}/\mathfrak{p}\oplus
\mathfrak{p}/\mathfrak{p}'$ as $H$-modules. Hence $\rho^{\ast}(E_1)\cong\rho^{\ast}(E_2)$.

\end{proof}



\section{Bundles with $e=1$ and more examples}

Let $E$ be a rank $r$ vector bundle on $\mathbb{P}^n$ with $e(E)=1$. By Proposition \ref{P2:e=1},
we have an exact sequence \[0\longrightarrow E'=\mathcal{O}_{\mathbb{P}^{n}}(b)\longrightarrow E''=
\bigoplus_{0\leq i\leq r}\mathcal{O}_{\mathbb{P}^{n}}(b_i)\longrightarrow E\longrightarrow 0\] such that
the map $H^{0}(E''(k))\longrightarrow H^{0}(E(k))$ is surjective for $k\in\bbZ$. One can show that the
bundle $E$ determines the numbers $b,b_0,...,b_r$. Moreover we have $b<\min\{b_0,b_1,...,b_r\}$. The reason
is that: if $b=\min\{b_0,b_1,...,b_r\}$, then $E$ is a direct sum of line bundles, which contradicts
to the fact that $e(E)=1$. Moreover, the maps $\phi: E'\longrightarrow E''$ and $\psi: E''\longrightarrow E$
are determined by $E$ up to linear changes of bases.

Let $a_{i}=b_{i}-b>0$ and $f_{i}\in H^{0}(\mathcal{O}_{\mathbb{P}^{n}}(a_i))\cong\Hom(\mathcal{O}_{\mathbb{P}^n}(b),
\mathcal{O}_{\mathbb{P}^n}(b_i))$. Then for an algebraic vector bundle $E$ on $\mathbb{P}^{n}$ with rank $r$
and $e(E)=1$, we associate $E$ with the integer $b$, positive integers $a_0,\dots,a_{r}$ and homogeneous
polynomials $f_{0},f_1,\dots,f_{r}\in\bbC[x_0,x_1,...,x_{n}]$ of degree $a_{0},a_1,\dots,a_{r}$,
respectively.

\begin{proposition}\label{P:e=1}
Given $a_{i}>0$ and $f_{i}\in H^{0}(\mathcal{O}_{\mathbb{P}^{n}}(a_i))$, $0\leq i\leq r$, the following
conditions are equivalent to each other: \begin{itemize}
\item[(1)]{The map $\phi=(f_0,...,f_r): \mathcal{O}_{\mathbb{P}^n}\longrightarrow \bigoplus_{0\leq i\leq r}
O_{\mathbb{P}^n}(a_i)$ is an injective map of algebraic vector bundles.}
\item[(2)]{The zero locus of $(f_0,...,f_r)$ in $\mathbb{P}^n$ is empty.}
\item[(3)]{$\rad(f_0,f_1,...,f_{r})=(x_0,x_1,...,x_{n})$ in $\bbC[x_0,...,x_{n}]$.}
\end{itemize}
In the case that these conditions are satisfied, we have that $r\geq n$ and $E=\Coker\phi$ is a bundle of rank $r$
with $e(E)=1$.
\end{proposition}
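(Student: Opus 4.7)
The plan is to dispatch the three equivalences and then extract the structural conclusions. For $(1)\Leftrightarrow(2)$, since $\phi$ is a map out of a line bundle, being an injective bundle map is equivalent to having full rank (rank one) at every point of $\mathbb{P}^{n}$; after locally trivializing the line bundles near a point $p$, this amounts to the vector $(f_{0}(p),\dots,f_{r}(p))$ being nonzero, i.e., $p$ not lying in the common zero locus. For $(2)\Leftrightarrow(3)$, I would pass to the affine cone: the common zero locus in $\mathbb{P}^{n}$ is empty iff the common zero locus in $\mathbb{A}^{n+1}$ is contained in $\{0\}$, which by the Nullstellensatz (using that each $f_{i}$ is homogeneous of positive degree, hence vanishes at $0$) is equivalent to $\rad(f_{0},\dots,f_{r})=(x_{0},\dots,x_{n})$.

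For the bound $r\geq n$, I would appeal to the projective dimension theorem: if $r+1\leq n$, the intersection of the $r+1$ hypersurfaces $\{f_{i}=0\}\subset\mathbb{P}^{n}$ has dimension at least $n-(r+1)\geq 0$, hence is nonempty, contradicting (2). Injectivity of $\phi$ as a bundle map, already established, makes $E=\Coker\phi$ locally free of rank $(r+1)-1=r$.

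The main work, and the step I expect to be hardest, is $e(E)=1$. The strategy is to twist the short exact sequence
\[0\longrightarrow\mathcal{O}_{\mathbb{P}^{n}}\longrightarrow\bigoplus_{0\leq i\leq r}\mathcal{O}_{\mathbb{P}^{n}}(a_{i})\longrightarrow E\longrightarrow 0\]
by $\mathcal{O}(k)$, take cohomology, and sum over $k$. Using $H^{1}(\mathcal{O}_{\mathbb{P}^{n}}(k))=0$ for all $k$ (which holds since we are in the case $n\geq 2$ implicit throughout), this yields a short exact sequence of graded $S$-modules
\[0\longrightarrow S\longrightarrow\bigoplus_{0\leq i\leq r}S(a_{i})\longrightarrow M_{E}\longrightarrow 0,\]
whose first map is multiplication by the column $(f_{0},\dots,f_{r})$. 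Since every $a_{i}>0$, each $f_{i}$ lies in $m$, so tensoring with $S/m=\bbC$ kills the left-hand map; hence $M'_{E}=M_{E}/mM_{E}$ has dimension $r+1$ over $\bbC$, and Lemma \ref{L:3-term resolution} yields $e(E)=(r+1)-r=1$.

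The main obstacle is keeping track of the grading when transferring the sheaf sequence to the graded module sequence, and verifying that the minimal generating set of $M_{E}$ has exactly $r+1$ elements rather than fewer — this relies critically on all $a_{i}$ being strictly positive, so that the relation provided by $\phi$ vanishes modulo $m$. The earlier parts are essentially standard bookkeeping around the Nullstellensatz and the projective intersection bound.
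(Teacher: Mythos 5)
Your proposal is correct and follows essentially the same route as the paper: both reduce $e(E)=1$ to the twisted long exact sequence $0\to H^{0}(\mathcal{O}(k))\to\bigoplus H^{0}(\mathcal{O}(a_{i}+k))\to H^{0}(E(k))\to 0$ and exploit $a_{i}>0$ to see that $M_{E}$ needs exactly $r+1$ generators, while $r\geq n$ comes from the same dimension count (Krull height in the paper, the projective dimension theorem in your version --- equivalent arguments). Your phrasing via tensoring with $S/m$ is just the Nakayama-style restatement of the paper's bookkeeping of generators in negative degrees, so there is nothing substantive to flag.
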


\begin{proof}
Since the locus of the points in $\mathbb{P}^{n}$ where $\phi$ is not injective is equal to the zero locus
of $(f_0,...,f_r)$, $(1)$ is equivalent to $(2)$. It is well-known that $(2)\Leftrightarrow(3)$.

In the case that $(1)-(3)$ hold, $r\geq n$ since the Krull dimension of $\bbC[x_0,...,x_{n}]$ is $n+1$. It is clear
that $\Coker\phi$ is an algebraic vector bundle of rank $r$ .

Write $E'=\mathcal{O}_{\mathbb{P}^n}$, $E''=\bigoplus_{0\leq i\leq r}\mathcal{O}_{\mathbb{P}^n}(a_i)$ and
$E=\Coker\phi$. By the short exact sequence $0\longrightarrow E'\longrightarrow E''\longrightarrow
E\longrightarrow 0$ and $H^{1}(E'(k))=0$ for $k\in\bbZ$, we get a short exact sequence
\[0\longrightarrow H^{0}(E'(k))\longrightarrow H^{0}(E''(k))\longrightarrow H^{0}(E(k))
\longrightarrow 0.\] Since $a_{i}>0$ for $0\leq i\leq r$, $\bigoplus_{k<0}H^{0}(E''(k))$ generates
$\bigoplus_{k\in\bbZ}H^{0}(E''(k))$ as an $S$-module. Thus $\bigoplus_{k<0}H^{0}(E(k))$ generates
$\bigoplus_{k\in\bbZ}H^{0}(E(k))$. As $H^{0}(E'(k))=0$ for $k<0$, $H^{0}(E''(k))\longrightarrow
H^{0}(E(k))$ is an isomorphism for $k<0$. Hence $\bigoplus_{k\in\bbZ}H^{0}(E(k))$ and
$\bigoplus_{k\in\bbZ}H^{0}(E''(k))$ have the same minimal number of generators (cf. Lemma
\ref{L:3-term resolution} and its proof), which is equal to $r+1$. Therefore $e(E)=(r+1)-r=1$.
\end{proof}

\begin{lemma}\label{L:Claim1}
If $\rank E=r\geq n+1$ and $e(E)=1$, then there exist $r+1$ homogeneous generators $f_0,f_1,\dots,f_{r}$
of $M_{E}$ such that the zero locus of $$(f_0,f_1,\dots,f_{i_0-1},f_{i_0+1},\dots,f_{r-1},f_{r})$$ is still
empty for some $i_0$.
\end{lemma}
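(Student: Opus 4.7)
My plan is a Bertini-style genericity argument starting from the explicit resolution of Proposition \ref{P2:e=1}. That resolution
\[0\longrightarrow\mathcal{O}_{\mathbb{P}^{n}}(b)\xrightarrow{(f_0,\ldots,f_r)^T}\bigoplus_{0\leq i\leq r}\mathcal{O}_{\mathbb{P}^{n}}(b_{i})\longrightarrow E\longrightarrow 0\]
comes equipped with homogeneous polynomials $f_i$ of degrees $a_i=b_i-b>0$ satisfying $V(f_0,\ldots,f_r)=\emptyset$ by Proposition \ref{P:e=1}; the images of the basis vectors of the middle term are minimal generators of $M_E$, and an elementary automorphism of $\bigoplus_i\mathcal{O}_{\mathbb{P}^n}(b_i)$ transforms the $f_i$'s while keeping $E$ and the associated generating set of $M_E$ unchanged up to equivalence. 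I will choose the index $i_0$ with $a_{i_0}=\min_ia_i$, so that for every $j\neq i_0$ the modifications $f_j\mapsto f_j+h_jf_{i_0}$ with $h_j\in H^{0}(\mathcal{O}_{\mathbb{P}^n}(a_j-a_{i_0}))$ are admissible; the resulting change of basis is unipotent, hence invertible, so the modified tuple still corresponds to a minimal set of $r+1$ generators of $M_E$.

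Let $P:=\bigoplus_{j\neq i_0}H^{0}(\mathcal{O}_{\mathbb{P}^n}(a_j-a_{i_0}))$ parametrize the modifications, and introduce the incidence variety
\[I:=\bigl\{(p,(h_j))\in\mathbb{P}^n\times P\colon f_j(p)+h_j(p)f_{i_0}(p)=0\text{ for all }j\neq i_0\bigr\}.\]
The heart of the argument is the fiber computation for the first projection $I\to\mathbb{P}^n$: if $f_{i_0}(p)=0$, the equations force $f_j(p)=0$ for every $j\neq i_0$, hence all $f_i(p)=0$, contradicting $V(f_0,\ldots,f_r)=\emptyset$; so such fibers are empty. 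Otherwise each $h_j$ satisfies the single affine-linear condition $h_j(p)=-f_j(p)/f_{i_0}(p)\in\bbC$, and since distinct $h_j$'s appear in disjoint equations these conditions decouple, so the fiber is an affine subspace of codimension exactly $r$ in $P$. Therefore $\dim I\leq n+\dim P-r$, and because $\mathbb{P}^n$ is complete the second projection is closed, giving an image $B\subset P$ which is closed of dimension at most $n+\dim P-r$.

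When $r\geq n+1$ the inequality $n<r$ yields $\dim B<\dim P$, so $P\setminus B\neq\emptyset$, and any $(h_j)$ in the complement produces the required tuple $\{f_{i_0}\}\cup\{f_j+h_jf_{i_0}\}_{j\neq i_0}$: the subfamily obtained by deleting $f_{i_0}$ has empty common zero locus in $\mathbb{P}^n$. The main obstacle I anticipate is precisely the fiber-dimension step; the honest work is to argue that the constraints on the $h_j$'s are genuinely independent (rather than forcing extra algebraic relations between the $f_j$'s) and to rule out the case $f_{i_0}(p)=0$ using the hypothesis that the original $(f_0,\ldots,f_r)$ is unimodular. Verifying that the modification preserves the equivalence class of the resolution—so that the new polynomials still correspond to $r+1$ minimal generators of $M_E$—is the easier side and comes down to the determinant-one nature of the transition matrix.
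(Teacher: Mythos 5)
Your proof is correct, but it follows a genuinely different route from the paper's. The paper argues by contradiction: assuming no generating system with the stated property exists, it proves by induction on $k$ that for every system of $r+1$ generators and every subset of $k$ indices ($1\le k\le n+1$) the common zero locus of the remaining $f_i$'s has dimension exactly $k-1$; the induction step replaces $f_k$ by $f_k+\lambda_1 f_{k-1}g^{l}$ for a generic $\lambda_1$ and a suitable linear form $g$ so as to cut every $(k-1)$-dimensional irreducible component, and the case $k=n+1\le r$ produces an $n$-dimensional proper closed subset of $\mathbb{P}^n$, a contradiction. Your incidence-variety count replaces that entire induction by a single Bertini-type dimension estimate, and it is sound: the fibers over $\{f_{i_0}=0\}$ are empty by unimodularity, the fibers over $\{f_{i_0}\neq 0\}$ have codimension exactly $r$ because for each $j\neq i_0$ the evaluation $h_j\mapsto h_j(\tilde{p})$ is a nonzero linear functional on $H^{0}(\mathcal{O}_{\mathbb{P}^n}(a_j-a_{i_0}))$ for any lift $\tilde{p}\neq 0$ (the minimality of $a_{i_0}$ guaranteeing that these degrees are nonnegative, so the spaces are nonzero), and the inequality $n+\dim P-r<\dim P$ is precisely where $r\ge n+1$ enters --- the same place the paper uses it, in the guise of $k=n+1\le r$. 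Your approach buys a shorter, more conceptual proof which moreover shows that the good modifications form a dense open subset of $P$; the paper's approach extracts finer intermediate information (the exact dimensions of the zero loci of all subfamilies under the contradiction hypothesis), none of which is needed for the statement itself. In a final write-up you should make explicit the two points you already flag: that each evaluation condition on $h_j$ is a nontrivial affine-linear condition, so the fibers have codimension exactly $r$ and the dimension bound on $I$ holds; and that the unipotent automorphism of $\bigoplus_{i}\mathcal{O}_{\mathbb{P}^n}(b_i)$ carries the given minimal generating system of $M_E$ to another system of $r+1$ homogeneous generators, so the modified tuple is still of the required form.
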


\begin{proof}
Suppose we can not find a system of $r+1$ generators such that the zero locus of
\[](f_0,f_1,\dots,f_{i_0-1},f_{i_0+1},\dots,f_{r-1},f_{r})\] is still empty for some $i_0$.
We prove by induction that, for any system of $r+1$ generators $(f_0,f_1,\dots,f_{r})$ of $M_{E}$,
any $1\leq k\leq n+1$ and any subset $I\subset\{0,1,\dots,r\}$ of cardinality $k$, the zero locus of
$\{f_{i}|\ i\in\{0,1,\dots,r\}-I\}$ has dimension $k-1$.

For $k=1$, by assumption the zero locus $Z_{i}$ of $(f_0,f_1,\dots,f_{i-1},f_{i+1},\dots,f_{r-1},f_{r})$
is non-empty for each $i$. However, $Z_{i}\cap Z(f_{i})=\emptyset$ by the unimodularity condition. Thus
$\dim Z_{i}\leq 0$. Hence $\dim Z_{i}=0$. This proves the assertion in the case of $k=1$.

For $1\leq k\leq n$, suppose the assertion holds for $1,2,\dots,k$. We prove that the zero locus
$Z_{0,1,\dots,k}$ of $(f_{k+1},\dots,f_{r})$ has dimension $k$.
By the induction hypothesis, $Z_{0,1,\dots,k}\cap Z(f_{k})=Z_{0,1,\dots,k-1}$ has dimension $k-1$.
Since $Z(f_{k})$ is a hypersurface, we get $\dim Z_{0,1,\dots,k}\leq k$ by the Projective Dimension
Theorem (cf. \cite{Hartshorne}, Theorem 7.2). On the other hand, we have $\dim Z_{0,1,\dots,k}\geq
\dim Z_{0,1,\dots,k-1}=k-1$. Thus $\dim Z_{0,1,\dots,k}=k-1$ or $k$. If $\dim Z_{0,1,\dots,k}\neq k$,
then $\dim Z_{0,1,\dots,k}=k-1$. Hence $Z_{0,1,\dots,k}$ is a $(k-1)$-dimensional closed subset of
$\mathbb{P}^{n}$. Let $C_1,\dots,C_{s}$ be all $(k-1)$-dimensional irreducible components of
$Z_{0,1,\dots,k}$. Without loss of generality we may assume that $\deg f_{k-1}\leq\deg f_{k}$. Let
$l=\deg f_{k}-\deg f_{k-1}\geq 0$. Choose a linear function $g$ nonvanishing on each $C_{j}$,
$1\leq j\leq s$. Since $Z_{0,1,\dots,k}\cap Z(f_{k})\cap Z(f_{k-1})=Z_{0,1,\dots,k-2}$ has dimension
$k-2<k-1$, we have $C_j\not\subset Z(f_{k})$ or $C_j\not\subset Z(f_{k-1})$ for each $1\leq j\leq s$.
Thus at least one of $f_{k}$ and $f_{k-1}$ does not vanish on $C_{j}$. Since $g$ does not vanish on
$C_{j}$ as well and $C_{j}$ is irreducible, at least one of $f_{k}$ and $g^{l}f_{k-1}$ does not vanish
on $C_{j}$. Write $S_{j}$ for the set of points $[\lambda_0,\lambda_1]\in\mathbb{P}^{1}$ such that
$\lambda_{0}f_{k}+\lambda_{1}f_{k-1}g^{l}$ vanishes on $C_{j}$. Hence each $S_{j}$ has at most one
point. Choose a point $[1,\lambda_1]\in\mathbb{P}^{1}$ not lying in $\cup_{1\leq k\leq s}S_{j}$.
Let \[f'_{k}=f_{k}+\lambda_{1}f_{k-1}g^{l}.\] Then $f'_{k}$ does not vanish on each $C_{j}$,
$1\leq j\leq s$. That is to say, $C_{j}\not\subset Z(f'_{k})$. Hence
\[\dim (C_{j}\cap Z(f'_{k}))<\dim C_{j}=k-1,\] since $C_{j}$ is irreducible. Moreover, we get
\[\dim(Z_{0,1,\dots,k}\cap Z(f'_{k}))\leq k-2.\] However, \[f_0,\dots,f_{k-1},f'_{k}=f_{k}+
\lambda_{1}f_{k-1}g^{l},f_{k+1},\dots,f_{r}\] is also a system of generators of $M_{E}$.
Hence \[\dim(Z_{0,1,\dots,k}\cap Z(f'_{k}))\leq k-2\] contradicts the induction hypothesis.
Therefore $Z_{0,1,\dots,k}$ has dimension $k$.

Similarly, we can show that the zero locus $Z_{i_0,i_1,\dots,i_{k}}$ of
\[\{f_{i}|\ i\in\{0,1,\dots,r\}-\{i_0,i_1,\dots,i_{k}\}\}\] has dimension $k$ for each set of
indices $0\leq i_0<\cdots<i_{k}\leq r$. In this way we conclude the induction step.

Let $k=n+1\leq r$. Then the zero locus of $f_{0},\dots,f_{r-n-1}$ has dimension $k-1\geq n$. It is
impossible as it is a proper closed subset of $\mathbb{P}^{n}$. Therefore we reach the conclusion of
the Lemma.
\end{proof}

\begin{theorem}\label{T2:e=1}
Let $E$ be an algebraic vector bundle on $\mathbb{P}^n$ with $e(E)=1$. If $\rank E=r\geq n+1$, then
$\rho^{\ast}E$ is trivial.
\end{theorem}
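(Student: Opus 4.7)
The plan is to convert the $e(E)=1$ presentation of $E$ into a unimodular row over $R=\mathcal{O}(Y_{n})$ and then exploit the redundancy guaranteed by Lemma \ref{L:Claim1} to exhibit that row as completable via elementary operations. In this way, the proof that $\rho^{\ast}E$ is trivial reduces to a direct manipulation of a unimodular row of length $r+1$ in $R$.

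First, I would apply Proposition \ref{P:e=1} to present $E$ as the cokernel of a map $\phi=(f_{0},\ldots,f_{r}):\mathcal{O}_{\mathbb{P}^{n}}(b)\to\bigoplus_{i=0}^{r}\mathcal{O}_{\mathbb{P}^{n}}(b_{i})$ with $\rad(f_{0},\ldots,f_{r})=(x_{0},\ldots,x_{n})$. Since $r\geq n+1$, Lemma \ref{L:Claim1} then lets me further assume, after relabeling, that already $\rad(f_{0},\ldots,f_{r-1})=(x_{0},\ldots,x_{n})$, so that $f_{r}$ is a genuinely redundant generator. Next I would pull this resolution back along $\rho$; using that $\rho$ is affine and that all line bundles on $Y_{n}$ are trivial (as already invoked in the proof of Theorem \ref{T:easy examples}), I obtain a short exact sequence of $R$-modules
\[0\longrightarrow R\xrightarrow{(\bar{f}_{0},\ldots,\bar{f}_{r})}R^{r+1}\longrightarrow\rho^{\ast}E\longrightarrow 0,\]
which splits because $Y_{n}$ is affine. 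Consequently $\rho^{\ast}E\oplus R\cong R^{r+1}$, and $\rho^{\ast}E$ is isomorphic to the projective $R$-module $P_{\vec{f}}$ associated with $\vec{f}=(\bar{f}_{0},\ldots,\bar{f}_{r})$, where $\bar{f}_{i}$ denotes the image of $f_{i}$ in $R$. The task now is to show that $\vec{f}$ is completable.

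For this last step, the defining relation $\sum x_{i}y_{i}=1$ of $R$ promotes $(x_{0},\ldots,x_{n})$ to the unit ideal of $R$, so the radical condition on $(f_{0},\ldots,f_{r-1})$ becomes honest unimodularity: there exist $g_{0},\ldots,g_{r-1}\in R$ with $\sum g_{i}\bar{f}_{i}=1$. A single elementary transformation (subtracting $\bar{f}_{r}g_{i}$ times the $i$-th entry from the last, for each $i<r$) clears the last coordinate, giving $\vec{f}\sim(\bar{f}_{0},\ldots,\bar{f}_{r-1},0)$; a permutation together with the same relation $\sum g_{i}\bar{f}_{i}=1$ then produces a leading $1$, and a final round of elementary operations reduces the row to $(1,0,\ldots,0)$. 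Hence $\rho^{\ast}E=P_{\vec{f}}$ is free. I do not anticipate a real obstacle: the essential work sits in Lemma \ref{L:Claim1}, and the remaining row manipulations are standard. The only point calling for attention is the passage from the geometric radical condition over $S$ to literal unimodularity over $R$, which is handled transparently by the equation $\sum x_{i}y_{i}=1$ defining $Y_{n}$.
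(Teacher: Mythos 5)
Your proposal is correct and follows essentially the same route as the paper: both rest on Lemma \ref{L:Claim1} to produce a redundant generator and then exploit affineness of $Y_{n}$ (plus triviality of its line bundles), the only difference being that you phrase the final step as an elementary completion of the unimodular row $(\bar f_0,\dots,\bar f_r)$ while the paper equivalently splits the two short exact sequences $0\to E_2\to E(-b)\to E_3\to 0$ and $0\to\mathcal{O}_{\mathbb{P}^n}\to E_1\to E_3\to 0$ over the affine $Y_{n}$. One cosmetic slip: $\rho^{\ast}E$ is the cokernel of the column $(\bar f_0,\dots,\bar f_r)$, hence the module $P_{\vec g}$ for a splitting row $\vec g$ rather than literally $P_{\vec f}=\Ker(\vec f)$; since either is free exactly when $\vec f$ is completable, the conclusion is unaffected.
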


\begin{proof}
By Propositions \ref{P2:e=1} and \ref{P:e=1}, $E$ is determined by the integer $b$, $r+1$ positive
integers $a_0,\dots,a_{r}$ ($a_0\leq a_1\leq\cdots\leq a_{r-1}\leq a_{r}$) and homogeneous polynomials
$f_0,\dots,f_{r}$ with $f_{i}$ of degree $a_{i}$, $0\leq i\leq r$. These homogeneous polynomials
$f_0,\dots,f_{r}$ arise from a choice of generators of $M_{E}=\bigoplus_{k\in\bbZ}H^{0}(E(k))$, and they
are not uniquely determined by $E$. For example, if \[f'_{i}=f_i+\sum_{0\leq j\leq i-1}g_{i,j}f_{j},\
0\leq i\leq r,\] with $g_{i,j}$ a homogeneous polynomial of degree $a_{i}-a_{j}$, then
$f'_0,\dots,f'_{r}$ and $f_0,\dots,f_{r}$ define isomorphic vector bundles.

By Lemma \ref{L:Claim1}, we can choose $f_0,f_1,\dots,f_{r}$ such that the zero locus of
\[(f_0,f_1,\dots,f_{i_0-1},f_{i_0+1},\dots,f_{r-1},f_{r})\] is empty for some $0\leq i_0\leq r$.
We have an exact sequence \[0\longrightarrow E'=\mathcal{O}_{\mathbb{P}^{n}}\stackrel{\phi}
\longrightarrow E''=\bigoplus_{0\leq i\leq r}\mathcal{O}_{\mathbb{P}^{n}}(a_i)\stackrel{\psi}
\longrightarrow E(-b)\longrightarrow 0.\] Write \[E_1=\bigoplus_{0\leq i\leq r,i\neq i_0}
\mathcal{O}_{\mathbb{P}^{n}}(a_i)\] and \[E_2=\mathcal{O}_{\mathbb{P}^{n}}(a_{i_0}).\] Let
$p:E''\longrightarrow E_1$ be the natural projection and $\phi'=p\circ\phi$.
Since the zero locus of \[](f_0,f_1,\dots,f_{i_0-1},f_{i_0+1},\dots,f_{r-1},f_{r})\] is empty,
$\phi'=p\circ\phi$ is an injective map of vector bundles. Write $E_3=\Coker\phi'$. We have an exact
sequence \[0\longrightarrow E_2\longrightarrow E(-b)\longrightarrow E_3\longrightarrow 0.\]
Thus, we get \begin{eqnarray*}&&\rho^{\ast}E\\&\cong&\rho^{\ast}(E_2)\oplus\rho^{\ast}(E_3)\
(\textrm{since } Y \textrm{ is affine})\\&\cong&\mathcal{O}_{Y}\oplus\rho^{\ast}(E_3)\ (\textrm{since }
E_2\textrm{ is a line bundle})\\&\cong&(\mathcal{O}_{Y})^{r}\ (\textrm{since } e(E_3)=1).\end{eqnarray*}
\end{proof}

\begin{theorem}\label{T:e=1}
Given an algebraic vector bundle $E$ on $P=\mathbb{P}^n$ of rank $n$ and with $e(E)=1$, let
$a_0,a_1,\dots,a_{n}$ be the degrees of homogeneous polynomials defining $E$.
\begin{itemize}
\item[(1)]{If $n!\mid a_0a_1\cdots a_{n}$, then $\rho^{\ast}E$ is trivial.}
\item[(2)]{If $n!\nmid a_0a_1\cdots a_{n}$, then $\rho^{\ast}E$ is not trivial.}
\end{itemize}
\end{theorem}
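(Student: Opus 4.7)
The plan is to reduce the triviality of $\rho^{\ast}E$ on $Y_n$ to completability of the unimodular row $(f_0,\dots,f_n)$ over $R$, and then to invoke Theorem~\ref{T:Kumar-Swan}.

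First I would use Propositions~\ref{P2:e=1} and~\ref{P:e=1} to fix a resolution
\[0\lra\mathcal{O}_{\mathbb{P}^n}(b)\stackrel{\phi}{\lra}\bigoplus_{0\leq i\leq n}\mathcal{O}_{\mathbb{P}^n}(b_i)\lra E\lra 0,\]
in which $\phi$ is given by the column $(f_0,\dots,f_n)^{t}$, where $f_i\in\bbC[x_0,\dots,x_n]$ is homogeneous of degree $a_i=b_i-b$ and $\rad(f_0,\dots,f_n)=(x_0,\dots,x_n)$. Since $\operatorname{Pic}(X_n)=0$ for $n\geq 1$, each $\mathcal{O}_{\mathbb{P}^n}(k)$ pulls back via $\pi$ to a trivial bundle on $X_n$, and hence via $\rho$ to a trivial bundle on $Y_n$. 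Pulling the resolution back and trivializing these line bundles yields
\[0\lra\mathcal{O}_{Y_n}\stackrel{\vec{f}}{\lra}(\mathcal{O}_{Y_n})^{n+1}\lra\rho^{\ast}E\lra 0,\]
with the left map sending $1\mapsto(f_0,\dots,f_n)^{t}$, viewed in $R^{n+1}$ via $\bbC[x_0,\dots,x_n]\hookrightarrow R$.

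Since $Y_n$ is affine, taking global sections is exact and gives a presentation
\[0\lra R\stackrel{\vec{f}}{\lra}R^{n+1}\lra M\lra 0\]
of $M=\Gamma(Y_n,\rho^{\ast}E)$. The row $(f_0,\dots,f_n)$ is unimodular in $R$: some power of each $x_i$ lies in $(f_0,\dots,f_n)$ inside $\bbC[x_0,\dots,x_n]$, and expanding a sufficiently high power of the relation $1=\sum x_iy_i$ then forces $1\in(f_0,\dots,f_n)R$. The key bridge I would establish next is the equivalence: $\rho^{\ast}E$ is trivial, equivalently $M$ is $R$-free of rank $n$, if and only if $(f_0,\dots,f_n)$ is completable. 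Given $A\in\GL_{n+1}(R)$ with first column $\vec{f}$, its remaining $n$ columns project to an $R$-basis of $M$; conversely, lifting an $R$-basis of $M$ and adjoining $\vec{f}$ defines a surjection $R^{n+1}\to R^{n+1}$, which between free modules of equal rank is automatically an isomorphism, so $\vec{f}$ becomes the first column of a matrix in $\GL_{n+1}(R)$.

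With this reformulation in hand, Theorem~\ref{T:Kumar-Swan} finishes both parts: Mohan Kumar's half yields a completion when $n!\mid a_0a_1\cdots a_n$, proving (1), and Swan's half obstructs completability when $n!\nmid a_0a_1\cdots a_n$, proving (2). The main conceptual step is the completability reformulation in the third paragraph, since the arithmetic heart of the problem is already packaged into Theorem~\ref{T:Kumar-Swan}; Swan's direction is precisely where the assumption of working over $\bbC$ is used.
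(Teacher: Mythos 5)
Your proposal is correct and follows exactly the route the paper intends: its proof of this theorem is the one-line citation of Propositions \ref{P2:e=1} and \ref{P:e=1} together with Theorem \ref{T:Kumar-Swan}, and your third paragraph simply makes explicit the bridge (triviality of $\rho^{\ast}E$ over the affine $Y_n$ is equivalent to completability of the unimodular row $(f_0,\dots,f_n)$ in $R$) that the paper leaves implicit. The details you supply --- unimodularity of the row over $R$, triviality of $\rho^{\ast}\mathcal{O}_{\mathbb{P}^n}(k)$, and the surjective-endomorphism argument --- are all sound.
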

\begin{proof}
This follows from Proposition \ref{P2:e=1}, Proposition \ref{P:e=1} and Theorem \ref{T:Kumar-Swan}.
\end{proof}

\begin{theorem}\label{T:any rank}
Given an integer $r\geq n$, there exists an algebraic vector bundle $E$ on $P=\mathbb{P}^{n}$ of
rank $r$ with $\pi^{\ast}E$ non-trivial and $\rho^{\ast}E$ trivial.
\end{theorem}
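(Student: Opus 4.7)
The plan is to build $E$ by invoking Proposition \ref{P:e=1} on carefully chosen homogeneous polynomials $f_0,\dots,f_r$, and then dispatch the two conditions (nontrivial pullback to $X_n$, trivial pullback to $Y_n$) using Theorem \ref{T:pull-back} together with Theorem \ref{T2:e=1} or Theorem \ref{T:e=1}, respectively. The only genuine point to watch is the boundary case $r=n$, where $e=1$ no longer suffices on its own to force $\rho^{\ast}E$ trivial: there one must additionally arrange the divisibility condition $n!\mid\prod a_i$ from Theorem \ref{T:e=1}(1).

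I would split the argument into two cases. For $r\geq n+1$, take \[f_i=x_i\text{ for }0\leq i\leq n,\qquad f_i=x_0\text{ for }n+1\leq i\leq r,\] all of degree $1$. Then $\rad(f_0,\dots,f_r)=(x_0,\dots,x_n)$, so by Proposition \ref{P:e=1} the map $\phi=(f_0,\dots,f_r):\mathcal{O}_{\mathbb{P}^n}\to\bigoplus_{i=0}^r\mathcal{O}_{\mathbb{P}^n}(1)$ is an injective bundle map whose cokernel $E$ has rank $r$ and satisfies $e(E)=1$. Since $e(E)\neq 0$, Theorem \ref{T:pull-back} shows $\pi^{\ast}E$ is nontrivial; since $\rank E=r\geq n+1$ and $e(E)=1$, Theorem \ref{T2:e=1} shows $\rho^{\ast}E$ is trivial.

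For $r=n$, the above construction still gives $e(E)=1$ and $\pi^{\ast}E$ nontrivial, but to get $\rho^{\ast}E$ trivial I must tune the degrees so that $n!$ divides $a_0a_1\cdots a_n$. A simple choice is \[f_0=x_0^{\,n!},\qquad f_i=x_i\ \ (1\leq i\leq n),\] again satisfying the unimodularity condition $\rad(f_0,\dots,f_n)=(x_0,\dots,x_n)$. Proposition \ref{P:e=1} again yields a rank-$n$ bundle $E=\Coker\phi$ with $e(E)=1$, and now $a_0a_1\cdots a_n=n!$ so Theorem \ref{T:e=1}(1) forces $\rho^{\ast}E$ trivial, while Theorem \ref{T:pull-back} again gives $\pi^{\ast}E$ nontrivial.

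The main obstacle, if any, is really bookkeeping rather than mathematics: verifying that Proposition \ref{P:e=1} applies (i.e., that the chosen $f_i$ have empty common zero locus in $\mathbb{P}^n$) and that the resulting $E$ has the correct rank. Once these are checked, the two theorems \ref{T2:e=1} and \ref{T:e=1}(1) do all the real work, and the case $r=n$ is the only one where one needs to be careful to pick the degrees so the Mohan Kumar divisibility hypothesis holds.
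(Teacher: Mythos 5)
Your proof is correct and follows essentially the same route as the paper, whose proof is just the citation of Theorem \ref{T:pull-back}, Proposition \ref{P:e=1} and Theorem \ref{T:e=1}; you have simply made the polynomial choices explicit. The only (harmless) divergence is that for $r\geq n+1$ you invoke Theorem \ref{T2:e=1}, whereas the paper's citations suggest handling all ranks through a rank-$n$ example with $n!\mid a_0a_1\cdots a_n$ (padded by trivial summands when $r>n$); either variant works.
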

\begin{proof}
This follows from Theorem \ref{T:pull-back}, Proposition \ref{P:e=1} and Theorem \ref{T:e=1}.
\end{proof}

\begin{lemma}\label{L:indecomposable}
Let $E$ be an algebraic vector bundle on $\mathbb{P}^{n}$ with $e(E)=1$. If $\rank E=n$, then
$E$ is indecomposable.
\end{lemma}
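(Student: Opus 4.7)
My plan is to argue by contradiction, relying on two facts already established: the additivity of $e$ under direct sums (Corollary \ref{C:e-additive}), and the rank lower bound for bundles with $e = 1$ that is built into Propositions \ref{P2:e=1} and \ref{P:e=1}.

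Suppose, for contradiction, that $E \cong E_1 \oplus E_2$ with $r_1 := \rank E_1 \geq 1$ and $r_2 := \rank E_2 \geq 1$. Corollary \ref{C:e-additive} gives $e(E_1) + e(E_2) = e(E) = 1$, so after relabeling I may assume $e(E_1) = 0$ and $e(E_2) = 1$. I now apply Proposition \ref{P2:e=1} to $E_2$ to obtain a short exact sequence
\[
0 \longrightarrow \mathcal{O}_{\mathbb{P}^n}(b) \longrightarrow \bigoplus_{i=0}^{r_2} \mathcal{O}_{\mathbb{P}^n}(b_i) \longrightarrow E_2 \longrightarrow 0
\]
with $b < \min_i b_i$. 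Since the quotient $E_2$ is locally free, the injection on the left is an injection of vector bundles, not merely a sheaf injection. Twisting by $\mathcal{O}(-b)$, this map is encoded by a tuple of homogeneous polynomials $(f_0, \ldots, f_{r_2})$ of positive degrees $a_i := b_i - b$ whose common zero locus in $\mathbb{P}^n$ is empty. Proposition \ref{P:e=1} then forces $r_2 \geq n$.

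But $r_2 = n - r_1 \leq n - 1 < n$, giving the required contradiction; hence $E$ admits no nontrivial direct sum decomposition.

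The only subtlety—hardly an obstacle—is ensuring that the unimodularity hypothesis in Proposition \ref{P:e=1} is genuinely satisfied, i.e., that $\mathcal{O}(b)$ embeds as a \emph{subbundle} of $\bigoplus \mathcal{O}(b_i)$ rather than merely as a subsheaf. This is automatic from local freeness of $E_2$: the short exact sequence of Proposition \ref{P2:e=1} is exact on fibers, so the zero locus of the $f_i$ coincides with the degeneracy locus of the inclusion, which is empty.
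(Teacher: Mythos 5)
Your proof is correct and follows essentially the same route as the paper: additivity and non-negativity of $e$ to isolate a summand with $e=1$, then the rank bound $\rank \geq n$ from Propositions \ref{P2:e=1} and \ref{P:e=1} to derive the contradiction. You merely spell out in more detail (the fibrewise injectivity, the positivity of the degrees $a_i$) what the paper's one-line citation of Proposition \ref{P:e=1} leaves implicit.
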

\begin{proof}
Suppose that $E$ is decomposable, i.e., $E=E_1\oplus E_2$ where $E_1$ and $E_2$ are non-zero algebraic
vector bundles on $\mathbb{P}^{n}$. Since $1=e(E)=e(E_1)+e(E_2)$, we have $e(E_1)=1$ or $e(E_2)=1$. We
may assume that $e(E_1)=1$. By Proposition \ref{P:e=1}, we have $\rank E_1\geq n$. Thus
$\rank E=\rank E_1+\rank E_2\geq n+1$, which contradicts to the fact that $\rank E=n$.
\end{proof}

\begin{theorem}[\cite{Horrocks}, Proposition 9.5]\label{T:Horrocks2}
Let $E_1,E_2$ be two indecomposable algebraic vector bundles on $\mathbb{P}^{n}$ of the same rank.
If $\pi^{\ast}(E_1)\cong\pi^{\ast}(E_2)$, then $E_2\cong E_1(k)$ for some $k\in\bbZ$.
\end{theorem}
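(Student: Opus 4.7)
The plan is to exploit the fact that $\pi: X_n \to \mathbb{P}^n$ is a $\bbC^\ast$-bundle, so that $\Hom$-spaces between pullbacks decompose into weight components indexed by $\bbZ$, and then to use indecomposability to isolate one weight at which $E_1$ becomes isomorphic to a twist of $E_2$ already on $\mathbb{P}^n$.

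First, repeating the pushforward/Leray computation from the proof of Theorem \ref{T:pull-back}, I would establish
\[\Hom_{X_n}(\pi^\ast E_1, \pi^\ast E_2) = H^0(X_n, \pi^\ast(E_1^\ast \otimes E_2)) = \bigoplus_{k \in \bbZ} \Hom_{\mathbb{P}^n}(E_1, E_2(k)).\]
So the hypothesized isomorphism $\phi: \pi^\ast E_1 \to \pi^\ast E_2$ decomposes as a finite sum $\phi = \sum_k \phi_k$ with $\phi_k \in \Hom_{\mathbb{P}^n}(E_1, E_2(k))$, and its inverse decomposes as $\psi = \sum_l \psi_l$ with $\psi_l \in \Hom_{\mathbb{P}^n}(E_2, E_1(l))$. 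The equation $\psi\phi = \id_{\pi^\ast E_1}$, read in weight zero, becomes
\[\sum_{k \in \bbZ} \psi_{-k}(k) \circ \phi_k = \id_{E_1} \quad \text{in } \End_{\mathbb{P}^n}(E_1),\]
where $\psi_{-k}(k): E_2(k) \to E_1$ denotes the twist of $\psi_{-k}$ by $\mathcal{O}_{\mathbb{P}^n}(k)$.

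Next I would invoke that $E_1$ indecomposable implies $\End_{\mathbb{P}^n}(E_1)$ is a local finite-dimensional $\bbC$-algebra (coherent sheaves on the proper scheme $\mathbb{P}^n$ form a Krull--Schmidt category). Because the non-units of a local ring form an additive subgroup and $\id_{E_1}$ is a unit, at least one of the finitely many summands $\psi_{-k_0}(k_0) \circ \phi_{k_0}$ must itself be a unit. Hence $\phi_{k_0}: E_1 \to E_2(k_0)$ admits a left inverse, so $E_1$ is a direct summand of $E_2(k_0)$. Since $E_2$---and therefore $E_2(k_0)$---is indecomposable, the only nonzero direct summand is the whole bundle, yielding $E_1 \cong E_2(k_0)$, i.e., $E_2 \cong E_1(-k_0)$.

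The main obstacle I foresee is cleanly recording the fact that $\End(E_1)$ is local for an indecomposable coherent sheaf on a projective variety over $\bbC$ (Atiyah's Krull--Schmidt theorem applied in this context); this is not among the results produced earlier in the paper, but it is standard. The remaining manipulations---weight decomposition of $\Hom$ spaces, detecting a split injection in one weight component, and using indecomposability of the twist $E_2(k_0)$---are all formal consequences of what has already been developed.
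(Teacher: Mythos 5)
Your argument is correct. Note that the paper itself offers no proof of this statement; it is quoted directly from Horrocks (Proposition 9.5 of \cite{Horrocks}), so there is no in-paper argument to compare against. Your route is the standard one and is essentially Horrocks's: decompose $\Hom_{X_n}(\pi^{\ast}E_1,\pi^{\ast}E_2)\cong\bigoplus_{k\in\bbZ}\Hom_{\mathbb{P}^n}(E_1,E_2(k))$ via $\pi_{\ast}\pi^{\ast}(-)\cong\bigoplus_k(-)(k)$ (the same Leray/affine-morphism computation used in Theorem \ref{T:pull-back}), read the identity $\psi\circ\phi=\id$ in weight zero, and use that $\End_{\mathbb{P}^n}(E_1)$ is a finite-dimensional local $\bbC$-algebra (Fitting's lemma for an indecomposable coherent sheaf on a proper scheme) to extract a single weight $k_0$ in which $\phi_{k_0}:E_1\to E_2(k_0)$ is split injective, whence $E_1\cong E_2(k_0)$ by indecomposability of the twist. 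The two points you flag as needing care are exactly the right ones, and both are fine: the grading is a genuine direct sum (so $\phi$ and $\psi$ have only finitely many nonzero components, and the nonunits of the local ring $\End(E_1)$ are closed under addition), and locality of the endomorphism ring of an indecomposable bundle over $\bbC$ is standard even though it is not established elsewhere in the paper.
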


\begin{theorem}\label{T:family}
There exist continuous families of arbitrarily large dimension of rank $n$ algebraic vector bundles on
$X_{n}$ satisfying that the bundles in each family are pairwise non-isomorphic and their pull-backs
to $Y_{n}$ are trivial bundles.
\end{theorem}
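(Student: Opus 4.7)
The plan is to realize the families inside the class of rank $n$ bundles with $e=1$ on $\mathbb{P}^{n}$ whose defining polynomials all have the same degree $a$, for $a$ a suitable multiple of $n!$, and then push them down to $X_{n}$ via $\pi^{\ast}$.

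First, fix $a\in n!\cdot\bbZ_{\geq 1}$ (so that $n!\mid a^{n+1}$) and consider the Zariski open subvariety
\[V_{a}\subset\bigl(\bbC[x_0,\dots,x_{n}]_{a}\bigr)^{n+1}\]
of unimodular tuples $v=(f_0,\dots,f_{n})$, i.e., those satisfying the equivalent conditions of Proposition~\ref{P:e=1}. For each $v\in V_{a}$, the map $\phi_{v}=(f_0,\dots,f_{n}):\mathcal{O}_{\mathbb{P}^{n}}(-a)\hra\mathcal{O}_{\mathbb{P}^{n}}^{n+1}$ is an injection of vector bundles, and $E_{v}:=\Coker\phi_{v}$ is a rank $n$ bundle on $\mathbb{P}^{n}$ with $e(E_{v})=1$ by Proposition~\ref{P:e=1}. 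Since $n!\mid a^{n+1}$, Theorem~\ref{T:e=1}(1) gives that $\rho^{\ast}E_{v}$ is trivial on $Y_{n}$.

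Second, I would show that $\pi^{\ast}$ separates the $\GL_{n+1}(\bbC)$-orbits on $V_{a}$, where $\GL_{n+1}(\bbC)$ acts on tuples by matrix multiplication. Suppose $v_1,v_2\in V_{a}$ with $\pi^{\ast}E_{v_1}\cong\pi^{\ast}E_{v_2}$. Each $E_{v_{i}}$ is indecomposable of rank $n$ by Lemma~\ref{L:indecomposable}, so Theorem~\ref{T:Horrocks2} gives $E_{v_2}\cong E_{v_1}(k)$ for some $k\in\bbZ$. The invariants $(b,a_0,\dots,a_{n})$ of the $3$-term resolution (which are intrinsic, as noted at the beginning of this section) are $(-a,a,\dots,a)$ for both $E_{v_{1}}$ and $E_{v_{2}}$, whereas twisting by $\mathcal{O}(k)$ shifts $b\mapsto b+k$. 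Hence $k=0$, so $E_{v_{1}}\cong E_{v_{2}}$ on $\mathbb{P}^{n}$, and consequently $\phi_{v_{1}}$ and $\phi_{v_{2}}$ differ by automorphisms of the source $\mathcal{O}(-a)$ and the target $\mathcal{O}^{n+1}$. The source automorphism $\bbC^{\ast}$ sits inside the center of $\GL_{n+1}(\bbC)$, so $v_{1}$ and $v_{2}$ lie in the same $\GL_{n+1}(\bbC)$-orbit.

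Finally, a dimension count: $\dim V_{a}=(n+1)\binom{a+n}{n}$ grows polynomially in $a$, while every $\GL_{n+1}(\bbC)$-orbit has dimension at most $(n+1)^{2}$. Given any prescribed $N\geq 1$, choose $a\in n!\cdot\bbZ_{\geq 1}$ large enough that $(n+1)\binom{a+n}{n}-(n+1)^{2}>N$, and let $T\subset V_{a}$ be a sufficiently generic affine-linear subvariety of dimension $N$. Then $T$ meets each orbit in at most finitely many points, and after shrinking $T$ to a nonempty Zariski open subset, at most one point per orbit. The bundles $\{\pi^{\ast}E_{v}\}_{v\in T}$ form a continuous $N$-dimensional family of rank $n$ algebraic vector bundles on $X_{n}$, pairwise non-isomorphic by the previous paragraph and each with trivial pullback to $Y_{n}$. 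The main obstacle is the rigidity step in the second paragraph: combining Theorem~\ref{T:Horrocks2} with the uniqueness of the minimal $3$-term resolution of an $e=1$ bundle to kill the $\mathcal{O}(k)$-twist ambiguity. The remaining steps are routine bookkeeping.
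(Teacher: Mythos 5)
Your proposal is correct and follows essentially the same route as the paper: same choice of equal degrees $a$ with $n!\mid a^{n+1}$, same construction of rank $n$ bundles with $e=1$ via Proposition~\ref{P:e=1}, same use of Lemma~\ref{L:indecomposable} and Theorem~\ref{T:Horrocks2} for pairwise non-isomorphism of the pullbacks, and Theorem~\ref{T:e=1}(1) for triviality on $Y_{n}$; the paper merely parametrizes by $\Gr_{n+1}(V)$ minus a closed locus rather than by a generic slice of the space of unimodular tuples modulo $\GL_{n+1}(\bbC)$. Your explicit elimination of the $\mathcal{O}(k)$-twist ambiguity via the intrinsic invariants of the minimal $3$-term resolution is a point the paper leaves implicit, and is a welcome addition.
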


\begin{proof}
Choose a positive integer $a$ such that $$(\prod_{p\leq n}p)\mid a,$$ where $\prod$ runs over all primes
$p\leq n$. Let $a_0=a_1=\cdots=a_{n}=a$. Then \[n!\mid a^{n+1}=a_0a_1\cdots a_{n}.\]

Denote by $V$ the set of homogeneous polynomials of degree $a$. It is a linear vector space. Write
$\Gr_{n+1}(V)$ for the Grassmannian variety of $(n+1)$-dimensional subspaces of $V$. Define
\[Z=\{\span_{\bbC}\{f_0,\dots,f_{n}\}\in\Gr_{n+1}(V):\rad(f_0,\dots,f_{n})\neq (x_0,\dots,x_{n})\}\] and
\[Z'=\{([W],[(z_0,\dots,z_n)])\in\Gr_{n+1}(V)\times\mathbb{P}^{n}:g(a_0,\dots,a_{n})=0, \forall g\in W\}.\]
Then $Z$ is the image of $Z'$ under the projection of $\Gr_{n+1}(V)\times\mathbb{P}^{n}$ to its first
component. Since $Z'$ is Zariski closed and the above projection map is proper, $Z$ is a Zariski closed
subset of $\Gr_{n+1}(V)$. On the other hand, it is clear that $Z$ is a proper subset of $\Gr_{n+1}(V)$.
Thus its complement is an open dense subset of $\Gr_{n+1}(V)$.

For a sequence $f=(f_0,f_1,\dots,f_{n})$ such that \[\span_{\bbC}\{f_0,\dots,f_{n}\}\in\Gr_{n+1}-Z,\]
let $E_{f}$ be the rank $n$ bundle on $\mathbb{P}^{n}$ defined by $f$ as in Proposition \ref{P:e=1}. We
know that, $E_{f}\cong E_{f'}$ if and only if $f$ differs from $f'$ by an invertible linear transformation,
i.e., they correspond to the same point in $\Gr_{n+1}-Z$. In this way we get a continuous family
$\{E_{f}|\ f\in\Gr_{n+1}(V)-Z\}$ of pairwise non-isomorphic rank $n$ algebraic vector bundles on
$\mathbb{P}^{n}$. A simple dimension counting shows that the dimension of $\Gr_{n+1}(V)-Z$ tends to infinity as
$a$ approches infinity. By Theorem \ref{T:Horrocks2} and Lemma \ref{L:indecomposable},
$\{\pi^{\ast}(E_{f})|\ f\in\Gr_{n+1}(V)-Z\}$ are pairwise non-isomorphic algebraic vector bundles on
$X_{n}$. Moreover, by Theorem \ref{T:e=1} $(1)$ each $\rho^{\ast}(E_{f})$ is a trivial bundle on $Y_{n}$.
Therefore, we finish the proof of the theorem.
\end{proof}

\begin{remark}\label{R:family}
In the proof of Theorem \ref{T:family}, furthermore one can show that $Z$ is irreducible and
\[\dim Z=\Gr_{n+1}(V)-1.\]
\end{remark}

\section{A theorem of Swan}

In this section, we present a proof for Theorem \ref{T:Kumar-Swan}(2) communicated to the author by
Professor Swan in an email.

\begin{theorem}[Swan, \cite{Swan2}]
Write \[R=\bbC[x_0,x_1,...,x_n,y_0,y_1,...,y_n]/\langle x_0y_0+x_1y_1+\cdots+x_{n}y_{n}-1\rangle.\]
Let $\{f_{0},\dots,f_{n}\}$ be homogeneous polynomials of $x_0,...,x_{n}$ with
\[\rad(f_0,f_1,...,f_{n})=(x_0,x_1,...,x_{n}).\]
If $n!\nmid\prod \deg f_{i}$, then $(f_0,f_1,...,f_{n})\in R^{n+1}$ is not completable.
\end{theorem}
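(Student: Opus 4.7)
The plan is to translate completability of $\vec{f}=(f_0,\ldots,f_n)$ into a triviality question for a vector bundle on $Y_n$, then degenerate $\vec{f}$ inside the parameter space of admissible tuples to the monomial row $\vec{x}^{a}:=(x_0^{a_0},\ldots,x_n^{a_n})$, where Swan--Towber's Theorem \ref{T:SW-S}(2) already provides non-completability. First I would invoke Proposition \ref{P:e=1} to associate to $\vec{f}$ a rank $n$ vector bundle $E$ on $\mathbb{P}^n$ with $e(E)=1$ fitting into $0\to\mathcal{O}_{\mathbb{P}^n}\xrightarrow{(f_0,\ldots,f_n)}\bigoplus_{i=0}^n\mathcal{O}_{\mathbb{P}^n}(a_i)\to E\to 0$. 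Pulling this sequence back to $Y_n$ along $\rho$, trivializing the line bundles using $\operatorname{Pic}(Y_n)=0$, and dualizing identifies $(\rho^{\ast}E)^{\vee}$ with the projective $R$-module $P_{\vec{f}}=\ker(R^{n+1}\to R,\ e_i\mapsto f_i)$. Hence $\vec{f}$ is completable if and only if $\rho^{\ast}E$ is a trivial bundle on $Y_n$.

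Next I would deform inside the parameter space $W\subset\prod_{i}H^{0}(\mathbb{P}^{n},\mathcal{O}(a_{i}))$ of tuples of the prescribed multidegree with empty common zero locus on $\mathbb{P}^n$; this $W$ is the complement of a discriminant hypersurface in an affine space, and contains both $\vec{f}$ and $\vec{x}^{a}$. A general line $L$ through $\vec{f}$ and $\vec{x}^{a}$ meets the discriminant in only finitely many values $t_1,\ldots,t_m\in\mathbb{A}^1$. Thus the linear family $\vec{f}(t)=(1-t)\vec{x}^{a}+t\vec{f}$, whose components are homogeneous polynomials of multidegree $(a_0,\ldots,a_n)$ with coefficients in $\bbC[t]$, is unimodular at every closed-point fiber $t_0\in\bbC\setminus\{t_1,\ldots,t_m\}$. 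Since $\Spec R[t,p^{-1}]$ with $p(t)=\prod(t-t_i)$ is of finite type over $\bbC$, the fiberwise unimodularity at all closed points upgrades to unimodularity of $\vec{f}(t)$ over the localized ring $R[t,p^{-1}]$.

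Finally I would invoke a Bass--Quillen-type extended-module theorem in the form: for the regular affine $\bbC$-algebra $R=R[Y_n]$, every finitely generated projective module over $R[t,p(t)^{-1}]$ is extended from $R$. Applied to the family, this yields $P_{\vec{f}(t)}\cong P_{0}\otimes_{R}R[t,p^{-1}]$ for a single rank $n$ projective $R$-module $P_0$, so the fibers $P_{\vec{f}(t_0)}$ at all closed points of $\mathbb{A}^1\setminus\{t_1,\ldots,t_m\}$ are mutually isomorphic; after perturbing the parameterization slightly to ensure $0,1\notin\{t_1,\ldots,t_m\}$, this gives $P_{\vec{f}}\cong P_{\vec{x}^{a}}$. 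Combined with Theorem \ref{T:SW-S}(2), which asserts that $\vec{x}^{a}$ is not completable in $R$ when $n!\nmid\prod a_i$, we conclude that $P_{\vec{f}}$ is not free and hence $\vec{f}$ is not completable. The main obstacle is the invocation of the extended-module theorem for the localized ring $R[t,p^{-1}]$ rather than just $R[t]$: the localization is forced by the fact that the line pierces the discriminant, and justifying extendedness over $R[t,p^{-1}]$ is not a direct consequence of Quillen's original theorem and is the technical heart of the argument; this reduction neatly routes Swan's theorem back through Swan--Towber via the geometry of Section \ref{S:e}, avoiding any direct $K$-theoretic symbol computation.
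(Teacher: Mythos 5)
Your reduction of completability of $\vec{f}$ to freeness of $P_{\vec{f}}\cong(\rho^{\ast}E)^{\vee}$ is correct and consistent with how the paper passes between Theorem \ref{T:Kumar-Swan} and Theorem \ref{T:e=1}, and the identification of the non-unimodular locus with a resultant hypersurface is fine. But the step you yourself flag as the ``technical heart'' is a genuine gap, not a deferred technicality: you need every finitely generated projective module over $R[t,p(t)^{-1}]$ to be extended from $R$, and no such theorem is available. Lindel's solution of the Bass--Quillen conjecture for smooth affine algebras concerns $R[t]$, not its localizations; here $\Spec R[t,p^{-1}]=Y_{n}\times(\mathbb{A}^{1}\setminus\{t_1,\dots,t_m\})$, and a punctured affine line is not $\mathbb{A}^{1}$-contractible, so neither Quillen's local-global patching nor the homotopy invariance underlying Morel's theorem gives constancy of the fibres $P_{\vec{f}(t_0)}$. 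Moreover all these modules are stably free of the same rank, so any stable or $K$-theoretic argument (e.g.\ $K_0(R[t,p^{-1}])=K_0(R)$ for regular $R$) cannot help: the entire content of the Suslin/Swan--Towber circle is the failure of ``stably free implies free'', which is exactly the unstable statement you would need. The obvious repair --- joining $\vec{f}$ to $\vec{x}^{a}$ by a chain of genuine $\mathbb{A}^1$'s inside the unimodular locus $W$ --- also fails over $\bbC$: for a polynomial path $\gamma:\mathbb{A}^1\to\prod_i H^{0}(\mathcal{O}_{\mathbb{P}^n}(a_i))$ avoiding the resultant hypersurface $\{h=0\}$, the polynomial $h\circ\gamma$ has no zeros and is therefore constant, confining $\gamma$ to a level set of the resultant; you give no argument that such chains connect $\vec{f}$ to $\vec{x}^{a}$. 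In short, the deformation step is at least as hard as Swan's theorem itself.

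For comparison, the paper's proof is entirely different and avoids deformation: it forms the self-map $g=r\circ f\circ i$ of $S^{2n+1}$, reduces non-completability (via Swan--Towber) to showing $\deg g=\prod\deg f_{i}$, normalizes all degrees to a common $d$ by composing with $(z_0^{a_0},\dots,z_n^{a_n})$, and computes the degree by B\'ezout's theorem in $\mathbb{P}^{n+1}$ applied to $f_i(z)-a_iw^{d}=0$, together with positivity of the Jacobian at the preimages of a regular value (Lemma \ref{L:Swan}). If you wish to keep a purely algebraic route, you must supply an actual proof of deformation invariance for the specific family $P_{\vec{f}(t)}$ over the punctured line, which your proposal does not do.
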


\begin{proof}

Let $i: S^{2n+1}\longrightarrow\bbC^{n+1}-\{0\}$ and $r: \bbC^{n+1}-\{0\}\longrightarrow S^{2n+1}$
be the natural inclusion and projection, respectively. They are homotopy inverse to each other. Let
\[f=(f_{0},\dots,f_{n}):\bbC^{n+1}-\{0\}\longrightarrow\bbC^{n+1}-\{0\}\] and \[g=r\circ f\circ i.\]
By \cite{Swan-Towber}, it suffices to show that the map $g:S^{2n+1}\longrightarrow S^{2n+1}$ has
degree $\prod \deg f_{i}$.

One knows that the homology group $H_{2n+1}(\bbC^{n+1}-\{0\},\bbZ)$ is isomorphic to $\bbZ$. We define the
degree of the map $f$ by the scalar of the multiplication of its induced action on
$H_{2n+1}(\bbC^{n+1}-\{0\},\bbZ)$. It is clear that the degrees of $f$ and $g$ are equal. For maps
from $\bbC^{n+1}-\{0\}$ to itself of the form $f=(f_0,\dots,f_{n})$, it is clear that the degree is
multiplicative for the composition of maps. After composing $f$ with a map of the form
$(z_{0}^{a_0},...,z_{n}^{a_n})$, we may assume that $f_i$ are of the same degree $d$.

Consider the projective space $\mathbb{P}^{n+1}$ with homogeneous coordinates $[w:z_o:\dots:z_n]$.
Let $a\in S^{2n+1}\subset\bbC^{n+1}$ be a regular value of $f$. Define $X$ by the set of points such
that \[f_i(z)-a_iw^{d}=0,\ i=0,1,\dots,n.\] Then $X$ is not empty by the dimension theorem. However
the intersection of $X$ with $\{[w:z_o:\dots:z_n]\in\bbC^{n+1}|\ w = 0\}$ is empty by the unimodularity
assumption. Therefore we have $\dim X = 0$. This means that $X$ is a finite set. By Bezout's theorem,
$\deg X=\prod \deg f_{i}$. Now $X$ lies in $\{[w:z_o:\dots:z_n]\in\bbC^{n+1}|\ w\neq 0\}$. Making $w=1$,
we get that $X$ is equal to $f^{-1}(a)$. As $a$ is a regular value of $f$, all points of $X$ will have
multiplicity 1 and therefore $X$ will have $\prod \deg f_{i} = d^{n+1}$ distinct points.
It is clear that the radial projection $r$ gives a bijection $f^{-1}(a)\longrightarrow g^{-1}(a)$.
Thus $g^{-1}(a)$ also has $d^{n+1}$ distinct points.

\begin{lemma}\label{L:Swan}
If $a\in S^{2n+1}$ is a regular point of $g$, then the Jacobian of $g$ at each point $p\in g^{-1}(a)$
is positive and $a$ is also a regular value of $f$.
\end{lemma}

Lemma \ref{L:Swan} indicates that $f$ has a regular value $a\in S^{2n+1}$. For such an $a$, Lemma
\ref{L:Swan} indicates that the number of points in $g^{-1}(a)$ is equal to the degree of $g$. By the
above argument, the number of points in $g^{-1}(a)$ is equal to $d^{n+1}$. Therefore
$\deg g=d^{n+1}$.
\end{proof}

\begin{proof}[Proof of Lemma \ref{L:Swan}]
Given a point $p\in g^{-1}(a)$, changing $f$ to some $f'(z)=\lambda f(z)$ for some positive real number $\lambda$
if necessary, we may assume that $f(p)=a$ (i.e., $p\in f^{-1}(a)$). The tangent spaces of $\bbC^{n+1}-\{0\}$ at
$p$ and $a$ admit decompositions $T_p=N+S$, $T_a=N'+S'$, where $N,N'$ are the $1$-dimensional subspaces of normal
vectors, and $S,S'$ are the tangent spaces of $S^{2n+1}$ at $p,a$. The tangent map $i_{\ast,p}$ is an injective
map with image $S$, and the tangent map $r_{\ast,a}$ is a surjective map with kernel $N'$. Since $f$ is homogeneous
of degree $d$, we have $f_{\ast,p}(N)=N'$ and it is a positive scalar multiplication (here we identify $N,N'$ using
a non-zero normal vector field on $S^{2n+1}\subset\bbC^{n+1}-{0}$). Thus $f_{\ast,p}$ is a triangular matrix with
$f_{\ast,p}|_{N}$ and $g_{\ast,p}$ the block diagonal parts. The linear map $f_{\ast,p}|_{N}$ is clearly a positive
scalar multiplication. Moreover, since $f$ is holomorphic, we have $\det(f_{\ast,p})\geq 0$. Hence
$\det(g_{\ast,p})\geq 0$. Therefore $\det(g_{\ast,p})>0$ since $a$ is a regular value of $g$. By this we get
$\det(f_{\ast,p})>0$. Similarly, we can show that $\det(f_{\ast,q})>0$ for any other $q\in f^{-1}(a)$.
Therefore $a$ is a regular value of $f$.
\end{proof}

\section{Some remarks}

Based on our study of algebraic vector bundles on $\mathbb{P}^{n}$ with $e(E)=1$ and their pull-backs
to $X_{n}=\bbC^{n+1}\backslash\{0\}$ and $Y_{n}=\SL_{n+1}/\SL_{n}$, we ask the following questions.

\begin{question}\label{Q1}
Given an algebraic vector bundle $E$ on $\mathbb{P}^{n}$,
\begin{itemize}
\item[(1)]{is $\rho^{\ast}E$ a trivial bundle on $X_{n}$ whenever $\rank E\geq n+1$?}


\item[(2)]{In the case of $e(E)=1$, when is $E$ stable or semi-stable?}

\item[(3)]{In the case that $E$ is non-split and has rank at most $n-1$, can it satisfy that $M_{i}(E)=0$ for
$2\leq i\leq n-2$?}
\end{itemize}
\end{question}

A theorem of Kumar-Peterson-Rao (cf. \cite{Kumar-Peterson-Rao}) confirms the non-existence in Question
\ref{Q1} (3) in the case that $n$ is even. In the case that $n$ is odd, it implies that the rank is at least $n-1$.

\begin{theorem}[\cite{Kumar-Peterson-Rao}]\label{T:rank}
If $E$ is a non-split algebraic vector bundle on $\mathbb{P}^n$ with $M_{i}(E)=0$ for $2\leq i\leq n-2$,
then $\rank E\geq 2[\frac{n}{2}]$.
\end{theorem}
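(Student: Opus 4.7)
The plan is to extract from the vanishing hypothesis a Horrocks-style resolution of $E$ by sums of line bundles and then read the rank bound off its shape. Since $E$ is non-split, Horrocks' criterion (Theorem~\ref{T:Horrocks}) forces at least one of $M_1(E)$ or $M_{n-1}(E)$ to be nonzero; Serre duality interchanges these two modules when one passes from $E$ to $E^{\ast}(-n-1)$, and the cohomological hypothesis is symmetric in $i\leftrightarrow n-i$, so without loss of generality we may assume $M_1(E)\neq 0$.

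The first step iterates the construction in Lemma~\ref{L:3-term resolution}. Set $E_0=E$; at each stage choose a minimal surjection $L_j\twoheadrightarrow E_j$ with $L_j$ a direct sum of line bundles and $H^0(L_j(k))\to H^0(E_j(k))$ surjective for every $k\in\bbZ$, and set $E_{j+1}=\ker(L_j\to E_j)$. The cohomology long exact sequence, together with the vanishing $H^i(\mathcal{O}_{\mathbb{P}^n}(a))=0$ for $1\leq i\leq n-1$, yields $M_1(E_{j+1})=0$ along with the shift $M_i(E_{j+1})\cong M_{i-1}(E_j)$ for $2\leq i\leq n-1$. Hence the obstruction originally sitting in $M_1(E_0)$ climbs the cohomology ladder: after $n-2$ iterations it lives in $M_{n-1}(E_{n-2})\cong M_1(E_0)$, while every other intermediate cohomology module of $E_{n-2}$ vanishes by the initial hypothesis on $E_0$.

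The second step absorbs the surviving $M_{n-1}$ obstruction by one dual reduction---apply the analogous construction to $E_{n-2}^{\ast}$ and re-dualize, or equivalently use Horrocks' theorem to realize $E_{n-2}$ modulo line bundles as the kernel of a map between sums of line bundles controlled by $M_{n-1}(E_{n-2})$. Splicing the $n-2$ surjections with this last reduction produces a long exact sequence
\[
0\longrightarrow L'\longrightarrow L_{n-2}\longrightarrow\cdots\longrightarrow L_0\longrightarrow E\longrightarrow 0
\]
in which every $L_j$ and $L'$ is a direct sum of line bundles, and each $L_j$ has the minimal possible rank, namely the minimal number of generators of $M_{E_j}$ as an $S$-module (by the argument used to prove Lemma~\ref{L:3-term resolution}).

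The main obstacle is the final rank arithmetic: turning the length and minimality of this resolution into the precise inequality $\rank E\geq 2\lfloor n/2\rfloor$. Non-splitness of $E$ prevents any intermediate $E_j$ from becoming a sum of line bundles before stage $n-2$, so no $L_j$ can have rank zero; Evans--Griffith style syzygy estimates applied to this resolution then lower-bound $\rank E$ by the number of essential rungs of the cohomology ladder. Getting the sharp constant $2\lfloor n/2\rfloor$---in particular distinguishing the even and odd cases of $n$---is the delicate arithmetic at the heart of the Kumar--Peterson--Rao argument.
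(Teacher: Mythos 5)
The paper offers no proof of this statement---it is quoted verbatim from \cite{Kumar-Peterson-Rao}---so there is no in-paper argument to compare yours against; the only question is whether your proposal itself constitutes a proof, and it does not. The reductions you carry out are sound: the Serre-duality symmetry justifying the assumption $M_1(E)\neq 0$, the iterated minimal syzygy construction with the shift $M_i(E_{j+1})\cong M_{i-1}(E_j)$ for $2\le i\le n-1$ and $M_1(E_{j+1})=0$, and the resulting length-$n$ resolution of $E$ by direct sums of line bundles. But none of this yields the inequality $\rank E\ge 2[\frac{n}{2}]$, and your last paragraph concedes exactly that. The appeal to ``Evans--Griffith style syzygy estimates'' does not close the gap as stated: the syzygy theorem bounds from below the ranks of the non-free kernels $E_j$ sitting deep inside your resolution, whereas $E$ is the quotient at the top; since $\rank E_{j+1}=\rank L_j-\rank E_j$, the ranks along such a resolution can oscillate, and a long minimal resolution by sums of line bundles does not by itself force the resolved bundle to have large rank.

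What is actually needed---and what \cite{Kumar-Peterson-Rao} does---is to use the two surviving modules $M_1(E)$ and $M_{n-1}(E)$ simultaneously to exhibit $E$ as the cohomology of a minimal monad $0\to A\to B\to C\to 0$ in which $A$ and $C$ are direct sums of line bundles and $B$ is a direct sum of line bundles by Horrocks' criterion (Theorem \ref{T:Horrocks}), with minimality forcing every entry of the two maps to have positive degree. The rank bound then comes from a degeneracy-locus computation on these maps of twisted line bundles (in the spirit of the estimate $r\ge n$ in Proposition \ref{P:e=1}), and it is precisely this numerical step that separates the cases of $n$ even and $n$ odd and produces the constant $2[\frac{n}{2}]$. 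Without some version of that argument your proposal is a set-up for the proof, not the proof itself.
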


The 3-term resolution we constructed for bundles $E$ on $\mathbb{P}^{n}$ with $M_{i}(E)=0$
for $1\leq i\leq n-2$ is a special case of a general resolution theorem.

\begin{proposition}[Three-term resolution]\label{P:syzygy}
Given an algebraic vector bundle $E$ of rank $r$ on $P=\mathbb{P}^n$, we have a (canonical) exact sequence
of vector bundles \[0\longrightarrow E'\longrightarrow E''=\bigoplus_{1\leq i\leq e+r}
\mathcal{O}_{\mathbb{P}^n}(b_{i})\longrightarrow E\rightarrow 0\] such that $e=e(E)$ and $H^{0}(E''(k))
\longrightarrow H^{0}(E(k))$ is surjective for $k\in\bbZ$. For such an exact sequence, we have $M_{1}(E')=0$
and $H^{i}(E'(k))\cong H^{i-1}(E(k))$ for $2\leq i\leq n-1$ and $k\in\bbZ$.
\end{proposition}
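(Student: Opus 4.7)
The plan is to obtain the exact sequence directly from Lemma \ref{L:3-term resolution} and then extract the cohomological statements from the associated long exact sequence of sheaf cohomology.

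First I would choose a minimal homogeneous system of generators $x_1,\dots,x_{e+r}$ of the $S$-module $M_E$, where $e=e(E)=\dim_{\bbC}(M'_E)-r$; the argument in the proof of Lemma \ref{L:3-term resolution} shows that the induced evaluation map
\[\psi: E''=\bigoplus_{1\leq i\leq e+r}\mathcal{O}_{\mathbb{P}^n}(b_i)\longrightarrow E\]
is surjective as a map of vector bundles, and moreover induces a surjection $H^0(E''(k))\to H^0(E(k))$ for every $k\in\bbZ$ (this is precisely the content of minimality of the generating set, combined with the fact that $H^0(E(k))=0$ for $k$ sufficiently negative). Setting $E'$ to be the kernel of $\psi$ then produces the desired short exact sequence, with $E'$ locally free of rank $e$.

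Next I would twist the short exact sequence by $\mathcal{O}_{\mathbb{P}^n}(k)$ and pass to the long exact sequence of cohomology:
\[\cdots\longrightarrow H^{i-1}(E''(k))\longrightarrow H^{i-1}(E(k))\longrightarrow H^i(E'(k))\longrightarrow H^i(E''(k))\longrightarrow\cdots.\]
The key input is the classical vanishing $H^j(E''(k))=0$ for every $1\leq j\leq n-1$ and every $k\in\bbZ$, which holds because $E''(k)$ is a direct sum of line bundles $\mathcal{O}_{\mathbb{P}^n}(b_i+k)$ whose intermediate cohomology on $\mathbb{P}^n$ vanishes. Combining this vanishing (for $j=1$) with the global-section surjectivity of $\psi$ forces $H^1(E'(k))=0$ for every $k$, and hence $M_1(E')=0$. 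For each $2\leq i\leq n-1$, both bracketing groups $H^{i-1}(E''(k))$ and $H^i(E''(k))$ vanish, so the connecting homomorphism furnishes the required isomorphism $H^{i-1}(E(k))\cong H^i(E'(k))$.

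There is essentially no genuine obstacle in this argument: the heart of the matter, namely the global-section surjectivity $H^0(E''(k))\to H^0(E(k))$ for \emph{every} $k\in\bbZ$, is already secured by the minimality of the generating set in Lemma \ref{L:3-term resolution}, and the remainder reduces to diagram chasing in the long exact sequence. The only point worth highlighting is how to interpret the parenthetical ``canonical'' in the statement: the bundles $E'$ and $E''$ are canonical only up to isomorphism once a minimal generating set has been fixed, while the integer $e=e(E)$ and the multiset $\{b_i\}$ are intrinsic invariants of $E$.
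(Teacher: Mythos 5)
Your proof is correct and follows essentially the same route the paper takes: the paper leaves Proposition \ref{P:syzygy} without an explicit proof, but its argument for the special case in Proposition \ref{P:3-term resolution} is exactly your construction --- build $E''$ from a minimal generating set of $M_E$ via Lemma \ref{L:3-term resolution}, note the surjectivity on all $H^0(\cdot(k))$, and read off $M_1(E')=0$ and the isomorphisms $H^i(E'(k))\cong H^{i-1}(E(k))$ from the long exact sequence using the vanishing of intermediate cohomology of sums of line bundles. Your closing remark on the meaning of ``canonical'' is a reasonable reading and consistent with the paper's usage.
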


By Proposition \ref{P:syzygy} and Theorem \ref{T:Horrocks}, we have the following resolution.

\begin{proposition}[Syzygy resolution]\label{P2:syzygy}
Given an algebraic vector bundle $E$ of rank $r$ on $P=\mathbb{P}^n$, there is a (canonical) exact
sequence of vector bundles \[0\longrightarrow E_{n}\stackrel{\phi_{n}}\longrightarrow \cdots
\longrightarrow E_1\stackrel{\phi_1}\longrightarrow E=E_0\longrightarrow 0\]
such that each $E_{i}$ ($i\geq 1)$ is a direct sum of line bundles, $\rank\ker\phi_{i}=e(\Im\phi_{i})$
for $1\leq i\leq n$, and $H^{0}(E_{i}(k))\longrightarrow H^{0}(\Im\phi_{i}(k))$ is surjective for
$1\leq i\leq n$ and $k\in\bbZ$.
\end{proposition}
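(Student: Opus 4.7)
The plan is to iterate Proposition \ref{P:syzygy} and invoke Horrocks's criterion (Theorem \ref{T:Horrocks}) to force termination after at most $n$ steps. Set $F_0 = E$, and apply Proposition \ref{P:syzygy} to obtain a short exact sequence
\[0 \longrightarrow F_1 \longrightarrow E_1 \stackrel{\phi_1}{\longrightarrow} F_0 \longrightarrow 0\]
with $E_1$ a direct sum of line bundles, $\rank F_1 = e(F_0)$, the map $H^{0}(E_1(m)) \to H^{0}(F_0(m))$ surjective for all $m \in \bbZ$, and (from the auxiliary statements of Proposition \ref{P:syzygy}) $M_1(F_1) = 0$ together with the cohomology shift $H^{i}(F_1(m)) \cong H^{i-1}(F_0(m))$ for $2 \leq i \leq n-1$.

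Having constructed $F_{k-1}$, I would apply Proposition \ref{P:syzygy} again to produce
\[0 \longrightarrow F_k \longrightarrow E_k \longrightarrow F_{k-1} \longrightarrow 0,\]
and define $\phi_k \colon E_k \to E_{k-1}$ as the composition $E_k \twoheadrightarrow F_{k-1} \hookrightarrow E_{k-1}$, so that $\Im\phi_k = F_{k-1} = \ker\phi_{k-1}$. A straightforward induction on $k$, using the shift $H^{i}(F_k(m)) \cong H^{i-1}(F_{k-1}(m))$ in the range $2 \leq i \leq n-1$ together with $M_1(F_k) = 0$, shows that $M_i(F_k) = 0$ for $1 \leq i \leq \min(k,n-1)$ and $M_i(F_k) \cong M_{i-k}(E)$ for $k < i \leq n-1$. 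In particular, after $n-1$ iterations, $M_i(F_{n-1}) = 0$ for every $1 \leq i \leq n-1$.

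Horrocks's theorem (Theorem \ref{T:Horrocks}) then forces $F_{n-1}$ to be a direct sum of line bundles. Set $E_n = F_{n-1}$ with $\phi_n \colon E_n \hookrightarrow E_{n-1}$ the natural inclusion, so $\ker\phi_n = 0$ and $e(\Im\phi_n) = e(F_{n-1}) = 0$. Splicing the $n$ short exact sequences produces the required long resolution $0 \to E_n \to \cdots \to E_1 \to E_0 \to 0$, and the three listed properties---that each $E_i$ ($i \geq 1$) is a direct sum of line bundles, that $\rank\ker\phi_i = e(\Im\phi_i)$, and that $H^{0}(E_i(m)) \to H^{0}(\Im\phi_i(m))$ is surjective---are recorded at each stage by Proposition \ref{P:syzygy}. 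Canonicity is inherited from that of the three-term resolution, i.e., from choosing minimal homogeneous generating systems for the graded $S$-modules $M_{F_{k-1}}$ at each step.

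The only real obstacle is the cohomological bookkeeping in the inductive step; once one unpacks the shift isomorphism provided by Proposition \ref{P:syzygy}, the vanishing range $M_i(F_k) = 0$ for $i \leq k$ grows by one at each iteration, and the whole construction is essentially forced by Horrocks's criterion at the terminal stage $k = n-1$.
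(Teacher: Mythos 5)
Your proposal is correct and follows exactly the route the paper intends: the paper's own justification is the one-line remark that the resolution follows "by Proposition \ref{P:syzygy} and Theorem \ref{T:Horrocks}," i.e., iterate the three-term resolution and terminate with Horrocks's splitting criterion once the intermediate cohomology has been shifted away. Your inductive bookkeeping ($M_i(F_k)=0$ for $i\leq\min(k,n-1)$, forcing $F_{n-1}$ to split after $n-1$ steps) correctly fills in the details the paper leaves implicit.
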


\begin{definition}
We call \[s(E)=\max\{i|E_{i}\neq 0\}-1\] the {\bf complexity} of $E$.
\end{definition}

\begin{remark}
By Proposition \ref{P:syzygy}, the complexity $s(E)$ is equal to
\[\min\{i|M_{j}(E)=0,\forall j, 1\leq j\leq n-i\}-1.\]
\end{remark}



\end{document}